



\documentclass[final,3p,times]{elsarticle}


\usepackage{amssymb}
 \usepackage{amsthm}
 \usepackage{amsmath}
 \usepackage{algorithmic}
 \usepackage{algorithm}
\usepackage{url}
\usepackage{xcolor}
\usepackage{caption}
\captionsetup[algorithm]{labelsep=colon}
\usepackage{tabulary}
\newcolumntype{K}[1]{>{\centering\arraybackslash}p{#1}}

\journal{Journal of Computational Physics}

\begin{document}

\begin{frontmatter}



\title{Solving variational problems and partial differential equations that map between manifolds via the closest point method}

￼\author[sfu]{Nathan D.~King\corref{ndkcor}}
\ead{nathank@sfu.ca}
\author[sfu]{Steven J.~Ruuth}
\ead{sruuth@sfu.ca}

 \address[sfu]{Department of Mathematics, Simon Fraser University, Burnaby, BC, Canada, V5A 1S6}
 
 \cortext[ndkcor]{Corresponding author}

\begin{abstract}
Maps from a source manifold $ {\mathcal M}$ to a target manifold ${\mathcal N}$ appear in liquid crystals, colour image enhancement, texture mapping, brain mapping, and many other areas. A numerical framework to solve variational problems and partial differential equations (PDEs) that map between manifolds is introduced within this paper. Our approach, the closest point method for manifold mapping, reduces the problem of solving a constrained PDE between manifolds ${\mathcal M}$ and ${\mathcal N}$ to the simpler problems of solving a PDE on ${\mathcal M}$ and projecting to the closest points on ${\mathcal N}.$ In our approach, an embedding PDE is formulated in the embedding space using closest point representations of ${\mathcal M}$ and ${\mathcal N}.$ This enables the use of standard Cartesian numerics for general manifolds that are open or closed, with or without orientation, and of any codimension.  An algorithm is presented for the important example of harmonic maps and generalized to a broader class of PDEs, which includes $p$-harmonic maps. Improved efficiency and robustness are observed in convergence studies relative to the level set embedding methods. Harmonic and $p$-harmonic maps are computed for a variety of numerical examples. In these examples, we denoise texture maps, diffuse random maps between general manifolds, and enhance colour images.     
\end{abstract}

\begin{keyword}
Variational problems \sep partial differential equations \sep manifold mapping \sep the closest point method \sep $p$-harmonic maps \sep color image enhancement



\end{keyword}

\end{frontmatter}

\def\M{{\mathcal M}}
\def\N{{\mathcal N}}
\def\J{{\mathbf J}}
\def\H{{\mathbf H}}
\def\F{{\mathbf F}}
\def\G{{\mathbf G}}
\def\cp{\text{cp}}
\def\d{\text{d}}
\def\x{{\mathbf x}}
\def\y{{\mathbf y}}
\def\z{{\mathbf z}}
\def\u{{\mathbf u}}
\def\u{{\mathbf u}}
\newtheorem{prin}{Principle}
\newtheorem{defn}{Definition}
\newtheorem{theo}{Theorem}
\newtheorem{lem}{Lemma}


\section{Introduction}
\label{intro}
The need to compute maps from a source manifold $\M$ to a target manifold $\N$ is present in numerous fields, e.g., mathematical physics, image processing, computer vision, and medical imaging. In mathematical physics these types of maps occur in the study of liquid crystals~\cite{Virga1994}, micromagnetic materials~\cite{Landau1935}, biomembranes~\cite{Willmore1993}, and superconductors~\cite{Berlyand1999}. Applications in image processing and computer vision include colour image enhancement~\cite{Tang2001}, directional diffusion~\cite{Tang2000, Vese2002}, and texture mapping~\cite{Dinh2005}. The field of medical imaging contains applications such as brain image regularization~\cite{Memoli2004brain}, optic nerve head mapping~\cite{Gibson2010}, and brain mapping~\cite{Shi2007,Shi2009}.  

This paper introduces a numerical framework for solving variational problems and PDEs that define maps from a source manifold $\M$ to a target manifold $\N.$  Our primary concern is the development of numerical methods for PDEs derived from variational problems, i.e., the Euler-Lagrange equations. However, our approach also applies to more general PDEs. 

Intuition for numerical methods for manifold mapping problems can be gained from methods for {\it unconstrained PDEs on manifolds.} A PDE defined on a single manifold $\M$ is the special case when the solution $\u$ is not constrained to a target manifold $\N.$ One class of methods for such problems uses a smooth coordinate system or parameterization of the manifold.  In general, however, a substantial complication of the surface PDE can arise and artificial singularities can be introduced by the coordinate system~\cite{Floater2005}. A second approach solves the PDE on a triangulated representation of the manifold. There are numerous difficulties that can arise when using triangulations~\cite{Bertalmio2003}. In particular, there is no standard method for computing geometric primitives, e.g., tangents, normals, principal directions, and curvatures. The convergence of numerical methods on triangulated manifolds is also less understood compared to methods on Cartesian grids~\cite{Greer2006}.  

Another class of methods is  the {\it embedding methods}, which embed the surface PDE and solve in a narrow band surrounding the manifold.  The {\it embedding PDE} is constructed such that its solution, when restricted to the manifold, is the solution to the original surface PDE. An embedding method allows the use of standard Cartesian numerical methods when solving PDEs on complex surface geometries. Two main types of embedding methods have been developed: the level set method and the closest point method. Since these methods were developed for unconstrained PDEs on a manifold $\M,$ we denote them by $\text{LSM}_{\M}$ and $\text{CPM}_{\M},$ respectively. The $\text{LSM}_{\M}$  was introduced by Bertalm\'{i}o, Cheng, Osher and Sapiro~\cite{Bertalmio2001}. It represents the manifold as the zero level set of a higher dimensional function.  The $\text{CPM}_{\M}$ was introduced by Ruuth and Merriman~\cite{Ruuth2008}. It uses a closest point representation of the manifold.

 An obvious limitation of the $\text{LSM}_{\M}$ is that open manifolds with boundaries, or objects of codimension-two or higher, do not have a direct level set representation. Another difficulty arises when computations are localized to a band around the manifold. The introduction of boundaries at the edge of the computational domain leads to the use of artificial boundary conditions, which can degrade the convergence order; see~\cite{Greer2006} for the case of diffusion problems. On the other hand, the boundary values for the $\text{CPM}_{\M}$ are obtained from the manifold. This enables the use of banded computations without degrading the order of the underlying discretization.
 
Less work has been done on numerical methods for PDEs that map from $\M$ to $\N$ than on numerical methods for unconstrained PDEs on a single manifold. Notably, most of the numerical methods that have been developed compute harmonic maps for specific $\M$ and/or $\N.$ Numerical schemes of this type were first developed for the special case of $\N = S^{n-1},$ the unit hypersphere. See, for example,~\cite{Lin1989, Cohen1987,Dean1988,Cohen1989} for a number of algorithms that find stable solutions of harmonic maps onto $\N = S^{n-1}.$ One of the first algorithms proven to converge in a continuous setting was introduced by Alouges in~\cite{Alouges1997}. The algorithm was later proven to converge in a finite element setting, with acute triangles, by Bartels~\cite{Bartels2005}. Finite element methods  for more difficult problems have been developed, e.g., for $p$-harmonic maps~\cite{Barrett2007} and the Landau-Lifschitz-Gilbert equation~\cite{Alouges2014}. A finite element method for more general target manifolds has also been introduced; see~\cite{Bartels2010}. 

A different, parametric approach was taken by Vese and Osher~\cite{Vese2002} for $p$-harmonic maps onto $\N=S^{n-1}.$  Their method successfully denoises colour images, however, it is restricted to $\N=S^{n-1}.$  

 The $\text{LSM}_{\M}$  was extended by M\'{e}moli, Sapiro and Osher~\cite{Memoli2004} to solve variational problems and PDEs that define maps from $\M$ to $\N.$ This method will be denoted by $\text{LSM}_{\M}^{\N}$ throughout our paper. In a similar fashion, we extend the $\text{CPM}_{\M}$ to solve manifold mapping problems. Fundamental to our approach is the adoption of closest point representations of the source and target manifolds, $\M$ and $\N.$  This leads to improved geometric flexibility, as well as a means to avoid the introduction of artificial boundary conditions in banded computations. Since the method will handle problems that define maps between manifolds, it will be referred to as the closest point method for manifold mapping and will be denoted by $\text{CPM}_{\M}^{\N}.$

The paper is organized as follows.  We begin with a brief review of the original $\text{CPM}_{\M}$ for unconstrained PDEs on manifolds (Section~\ref{sec:cpm_review}). Section~\ref{sec:cpmmm} introduces our numerical framework for variational problems and PDEs that define maps from $\M$ to $\N,$ i.e., the $\text{CPM}_{\M}^{\N}$.  A comparison of the $\text{LSM}_{\M}^{\N}$ and the $\text{CPM}_{\M}^{\N}$ is given in Section~\ref{sec:compare}. The behaviour and performance of our method is illustrated with numerical examples in Section~\ref{sec:num_res}. In that section, noisy texture maps onto different target manifolds $\N$ are denoised. In addition, diffusion of a random map between general manifolds is shown and a method for colour image enhancement is illustrated. Section~\ref{sec:conc} gives conclusions and a discussion of possible future work.


\section{The closest point method for unconstrained PDEs on manifolds}
\label{sec:cpm_review}
 Our new algorithm is built on the explicit $\text{CPM}_{\M}$,~\cite{Ruuth2008}. We begin with a review of this method. An alternative, based on implicit time-stepping, is also possible; see~\cite{Macdonald2009} for further details on this method and its implementation.

 As the name suggests, the closest point method relies on a closest point representation of the manifold $\M.$ Closest point representations are less restrictive than  level set representations. A standard level set representation needs a well-defined inside and outside, which makes handling open manifolds and manifolds of codimension-two or higher more difficult. A closest point representation of the manifold $\M,$ in the embedding space $\mathbb{R}^m,$ assumes that for every $\x\in \mathbb{R}^m$ there exists a point $\cp_{\M}(\x)\in \M.$ The point $\cp_{\M}(\x)$ is the closest point on $\M$ to $\x$ in Euclidean distance:
\begin{defn}
Let $\x$ be some point in the embedding space $\mathbb{R}^m.$ Then, 
\begin{equation*}
\textnormal{\cp}_{\M}(\x) = \textnormal{arg}\hspace{0.01cm} \min_{\mathbf{z}\in\M} \|\x-\mathbf{z}\|_2
\end{equation*}
is the closest point of $\x$ to the manifold $\M.$
\label{def:cp}
\end{defn}

In general, the point $\cp_{\M}(\x)$ may not be unique. However, for a smooth manifold $\M$ it is unique if $\x$ is sufficiently close to $\M$~\cite{Macdonald2009,Marz2012}. Near such a smooth manifold, the closest point function and the well-known signed distance function $\d_{\M}$ of $\M$~\cite{Osher2006} are related via
\begin{equation}
\textnormal{\cp}_{\M}(\x) = \x - \textnormal{\d}_{\M}(\x) \nabla \textnormal{\d}_{\M}(\x).
\label{cp_func}
\end{equation}
The neighbourhood over which $\cp_{\M}$ is unique depends on the geometry of $\M,$ e.g., the size of its principal curvatures. Properties of the closest point function and calculus involving $\cp_{\M}$ have been investigated further by M\"{a}rz and Macdonald~\cite{Marz2012}. There they discuss the relationship between finitely smooth manifolds, finitely smooth functions on manifolds and PDE order. The definition of the closest point function is also extended to involve non-Euclidean distance.

 The $\text{CPM}_{\M}$ is an embedding method: it extends the problem defined on a manifold $\M$ to the embedding space $\mathbb{R}^m$ surrounding $\M.$ The $\text{CPM}_{\M}$ relies on two principles and the extension of surface data $u$ to construct an embedding PDE defined on $\mathbb{R}^m.$ Briefly, the intrinsic surface gradient $\nabla_{\M}$ and surface divergence $(\nabla_{\M} \hspace{0.02cm}\cdot)$ operators are replaced by the standard Cartesian gradient $\nabla$ and divergence $(\nabla \cdot)$ operators via the following principles~\cite{Ruuth2008}: 
\begin{prin}
Let $v$ be any function on $\mathbb{R}^m$ that is constant along normal directions of $\M.$ Then, at the surface, intrinsic gradients are equivalent to standard gradients, $\nabla_{\M} v = \nabla v.$
\label{prin1}
\end{prin}
\begin{prin}
Let $\mathbf{v}$ be any vector field on $\mathbb{R}^m$ that is tangent to $\M$ and tangent to all surfaces displaced by a fixed distance from $\M.$ Then, at the surface, $\nabla_{\M} \cdot \mathbf{v}  = \nabla \cdot \mathbf{v}.$
\label{prin2}
\end{prin}

Higher order derivatives can be handled by combining Principles~\ref{prin1} and \ref{prin2} with constant normal extensions of the surface data into the embedding space.  Constant normal extensions of the data are referred to as {\it closest point extensions} since they are implemented efficiently by composing surface data with the closest point function. That is, $u(\cp_{\M}(\x))$ is the closest point extension of $u$ at the point $\x \in \mathbb{R}^m.$ To illustrate this idea, consider the Laplace-Beltrami operator $\Delta_{\M} u = \nabla_{\M} \cdot (\nabla_{\M} u).$  If $u$ is a function defined on $\M,$ then $u(\cp_{\M})$ is constant along normal directions of $\M$ and therefore $\nabla_{\M} u = \nabla u(\cp_{\M})$ on $\M,$ by Principle~\ref{prin1}.  Principle~\ref{prin2} implies that $\nabla_{\M} \cdot(\nabla_{\M} u)=\nabla \cdot (\nabla u(\cp_{\M}))$ on $\M,$ since $\nabla_{\M} u$ is always tangent to the level sets of the distance function of $\M.$  In this fashion, an embedding PDE is obtained that involves standard Cartesian derivatives and a closest point function. 

The following steps detail the explicit $\text{CPM}_{\M}$ to solve PDEs on manifolds. First, a narrow banded computational domain, $\Omega_c,$ surrounding $\M$ is chosen and the initial surface data $u^0$ is extended onto $\Omega_c$ using the closest point extension. The following two steps are then alternated to obtain the explicit $\text{CPM}_{\M}$: 
\begin{itemize}
\item {\it Evolution.} The embedding PDE is solved on $\Omega_c$ for one time step (or one stage of a Runge-Kutta method).
\item {\it Closest point extension.} The solution on $\M$ is extended to the computational domain by replacing $u$ with $u(\cp_{\M})$ for all $\x\in\Omega_c.$
\end{itemize}

Note that the closest point extension defined in the second step involves interpolation. Interpolation is needed since $\cp_{\M}(\x)$ is not necessarily a grid point in $\Omega_c.$ The interpolation order depends on the derivative order $r$ and the differencing scheme order $q$ and should be chosen large enough to not produce errors greater than the differencing scheme. Following~\cite{Ruuth2008}, barycentric Lagrange interpolation is applied in a dimension-by-dimension fashion with polynomial degree $p=q+r-1$ in all our numerical examples.

For efficiency, computations should be localized to a banded region $\Omega_c$ surrounding the manifold. In our algorithms, a uniform hypercube grid is constructed around $\M$ and an indexing array is used to access points within a Euclidean distance $\lambda_c$ from $\M.$ The width of the computational band, $\lambda_c,$ depends on the degree of the interpolating polynomial $p,$  the differencing stencil, and the dimension of the embedding space $m.$  It is shown in~\cite{Ruuth2008} that for a second-order centred difference discretization of the Laplacian operator, 
\begin{equation*}
\lambda_c = \sqrt{(m-1)\left(\frac{p+1}{2}\right)^2 + \left(1+\frac{p+1}{2}\right)^2}\;\;\Delta x.
\end{equation*}

\section{Manifold mapping variational problems and PDEs}
\label{sec:cpmmm}
In this section, we introduce our framework for solving variational problems and PDEs that define maps from a source manifold $\M$ to a target manifold $\N.$ For clarity, we introduce the method for the case of harmonic maps. Other maps may also be approximated using our approach. We conclude this section by detailing an algorithm for these more general maps, which include $p$-harmonic maps. 

\subsection{Harmonic maps}
Harmonic maps~\cite{Moser2005,JostBook2011,Schoen1997} are important in many applications such as texture mapping~\cite{Dinh2005}, regularization of brain images~\cite{Memoli2004brain}, and colour image enhancement~\cite{Tang2001}.  Considerable research on the theory of harmonic maps has also been carried out, starting with the work of Fuller~\cite{Fuller1954} in 1954 and the more general theory by Eells and Sampson~\cite{Eells1964} in 1964. An important property of harmonic maps is their smoothness. They are also one of the most simple manifold mapping problems, whose study can provide insight into other mapping problems. Physically, a map is harmonic when $\M$ corresponds to a membrane that is constrained to $\N$ in elastic equilibrium~\cite{Eells1978}. 

We now give the mathematical definition of a harmonic map between two Riemannian manifolds $\M$ and $\N$~\cite{Memoli2004brain,Memoli2004}. Denote the signed distance functions of $\M$ and $\N$ as $\d_{\M}$ and $\d_{\N}$, respectively~\cite{Osher2006}. The intrinsic Jacobian of $\u$ is denoted by $\J_{\u}^{\d_{\M}}$ and can be written in terms of the standard Jacobian as $\J_{\u}^{\d_{\M}} = \J_{\u} \Pi_{\nabla \d_{\M}},$ where $\Pi_{\nabla \d_{\M}} = \mathbf{I} - \nabla \d_{\M} \nabla \d_{\M}^T$ is the projection operator onto the tangent space of $\M.$
\begin{defn}
Harmonic maps $\u:\M\rightarrow\N$ are the critical points of the Dirichlet energy  
\begin{equation}
E[\u] = \frac{1}{2}\int_{\M} \left\| \J_{\u}^{\d_{\M}}\right\|^2_{\mathcal{F}}  \;dv_{\M},
\label{imp_gen_energy}
\end{equation}
where $\| \cdot \|_{\mathcal{F}}$ is the Frobenius norm and $dv_\M$ is the volume element of $\M.$
\label{def:harm_map}  
\end{defn}

The map $\u:\M \rightarrow \N$ must be a $C^1$ map to ensure that $E[\u]$ is well-defined. Furthermore, by the Nash embedding theorem~\cite{Nash1954,Nash1956}, any Riemannian manifold $\M$ can be isometrically embedded in a higher dimensional Euclidean space $\mathbb{R}^m.$ Therefore, local coordinates on $\M$ and $\N$ can be written in terms of coordinates in Euclidean spaces $\mathbb{R}^m$ and $\mathbb{R}^n,$ respectively. That is, one can write $\u = (u_1,u_2, \ldots, u_n)^T$ with point-wise constraint $\u(\x)\in \N$ for any $\x= (x_1,x_2, \ldots, x_m)^T \in \M.$ 

M\'emoli et al.~\cite{Memoli2004} derived the Euler-Lagrange equations for~(\ref{imp_gen_energy}) in terms of the level set representation of $\N$ under the assumption that $\M$ is flat and open. The same calculation is carried out by Moser~\cite{Moser2005} in terms of the closest point representation of $\N.$ There, the closest point function is called the nearest point projection and is used to prove regularity results of harmonic maps (see Chapter 3 of~\cite{Moser2005}). The Euler-Lagrange equations corresponding to~(\ref{imp_gen_energy}), assuming $\M$ is flat and open, are
\begin{equation}
\Delta \u - \sum_{\ell = 1}^m \H_{\cp_{\N}(\u)} \left[\frac{\partial \u}{\partial x_{\ell}}, \frac{\partial \u}{\partial x_{\ell}} \right] = 0,
\label{cp_EL}
\end{equation}
where the notation $\mathbf{A} [\y, \z] = (\z^T \mathbf{A}^1 \y,\z^T \mathbf{A}^2 \y, \ldots,\z^T \mathbf{A}^n \y)^T$ is used. The matrix $\H_{\cp_{\N}(\u)}$ denotes the Hessian of $\cp_{\N}(\u),$ i.e., the Hessian of each component of $\cp_{\N}(\u)$ is  $\H^i_{\cp_{\N}(\u)}$ for $i=1,2,\ldots,n.$  To illustrate the process, \ref{app:liquid_cry} derives the Euler-Lagrange equations~(\ref{cp_EL}) for the important case where $\M$ is a flat, open subset of $\mathbb{R}^m$ and $\N=S^{n-1}.$ This corresponds to the application of liquid crystals~\cite{Virga1994}.

A solution to~(\ref{cp_EL}) could be obtained by evolving the corresponding gradient descent flow to steady state (cf.~\cite{Memoli2004, Moser2005}). The gradient descent flow is a PDE that introduces an artificial time variable and evolves in the direction of maximal decrease of the energy. Numerically, one could discretize this gradient descent flow and evolve the solution until some long time $t_f.$ A simpler approach to numerically approximate the harmonic map via a gradient descent flow is given next. We shall see that our approach has the further benefit of handling more general variational problems and PDEs, including $p$-harmonic maps.

\subsection{The closest point method for manifold mapping}
\label{subsec:cpmmm}
To design a numerical method  we do not discretize~(\ref{cp_EL}). Instead, we write the Euler-Lagrange equations for~(\ref{imp_gen_energy}) as $\Pi_{T_{\u} \N} (\Delta_{\M} \u) = 0$~\cite{Schoen1997}, where $\Pi_{T_{\u} \N}$ is the projection operator at the point $\u$ onto the tangent space of $\N.$  The vector $\Delta_{\M} \u$ is defined component-wise, i.e., $\Delta_{\M} \u = (\Delta_{\M} u_1, \Delta_{\M} u_2, \ldots, \Delta_{\M} u_n)^T.$ The corresponding gradient descent flow is
\begin{equation}
\left\{ \begin{aligned}
&\frac{\partial \u}{\partial t} = \Pi_{T_{\u} \N} (\Delta_{\M} \u),\\
&\u (\x,0) = \u^0 (\x),\\
&\J_{\u}^{\d_{\M}} \mathbf{n}|_{\partial \M} = 0,
\end{aligned} \right.
\label{proj_graddes}
\end{equation}
where $\u^0(\x)$ is a given initial map. A justification for the homogeneous Neumann boundary conditions is given in Appendix A of~\cite{Memoli2004}.

To discretize~(\ref{proj_graddes}), intrinsic geometric quantities are replaced by terms involving standard Cartesian coordinates and closest point functions. As we saw in Section~\ref{sec:cpm_review}, the term $\Delta_{\M} \u$ can be replaced by $\Delta \u(\cp_{\M})$ using the $\text{CPM}_{\M}.$ Furthermore, the projection operator $\Pi_{T_{\u}\N}$ equals the Jacobian of the closest point function, $\J_{\cp_{\N}(\u)},$ for $\u\in \N$~\cite{Marz2012,Moser2005}. Applying these replacements gives the embedding gradient descent flow
\begin{equation}
\frac{\partial \u}{\partial t} =\J_{\cp_{\N}(\u)} (\Delta \u(\cp_{\M})).
\label{emb_proj_graddes}
\end{equation}
 
New identities may be required to formulate an embedding PDE for more general variational problems and PDEs.  However, the general procedure is the same in all cases: rewrite geometric quantities intrinsic to $\M$ and $\N$ in terms of $\cp_{\M}$ and $\cp_{\N},$ respectively. 
 
The closest point function, $\cp_{\N},$ is itself a projection operator onto $\N.$ By splitting the evolution of~(\ref{emb_proj_graddes}) into two steps we can eliminate the computation of $\J_{\cp_{\N}}$ and further simplify the numerical method. More generally, a splitting can be formulated for any PDE with intrinsic geometric terms on $\M$ that are projected onto the tangent space of $\N,$ e.g., PDEs of the form
\begin{equation}
\frac{\partial \u}{\partial t} = \Pi_{T_{\u}\N}  (\mathbf{F}(\x,\u, \nabla_{\M} \u, \nabla_{\M} \cdot( \nabla_{\M} \u), \ldots)).
\label{gen_proj}
\end{equation} 

 To solve~(\ref{gen_proj}), we first evolve an embedding PDE on $\M,$ 
 \begin{equation*}
 \frac{\partial \tilde{\u}}{\partial t} =  \mathbf{F}(\cp_{\M},\tilde{\u}(\cp_{\M}), \nabla\tilde{\u}(\cp_{\M}), \nabla \cdot( \nabla \tilde{\u}(\cp_{\M})), \ldots),
\end{equation*}  
 for one time step of size $\Delta t$ to give $\u_{\text{ext}}(\x)$ at each grid node $\x \in \Omega_c.$ We emphasize that this step omits the projection $\Pi_{T_{\u} \N}$ (equivalently $\J_{\cp_{\N}(\u)}$) appearing in~(\ref{gen_proj}). The second step projects $\u_{\text{ext}}(\x)$ onto $\N$ via $\cp_{\N}(\u_{\text{ext}}(\x)).$ The result, $\u^{k+1}(\x),$ approximates the solution $\u(\x,t^{k+1}),$ $t^{k+1} = (k+1)\Delta t,$ to~(\ref{gen_proj}) at points $\x \in \M.$ Starting from $\u^0(\x) = \u(\cp_{\M}(\x),0),$ the steps of the $\text{CPM}_{\M}^{\N}$ to advance from time $t^k$ to time $t^{k+1}$ are given explicitly by Algorithm~\ref{alg:cpmmm} below.
\begin{algorithm}
\caption{A time step of the $\text{CPM}_{\M}^{\N}$ for $ \partial \u/\partial t= \Pi_{T_{\u}\N} (\mathbf{F}(\x,\u,\nabla_{\M} \u,\nabla_{\M} \cdot(\nabla_{\M} \u), \ldots)),$ starting from $\u^k(\x).$}
\label{alg:cpmmm}
\begin{algorithmic}
\STATE 1. Solve  $\partial \tilde{\u}/\partial t =  \mathbf{F}(\x,\tilde{\u},\nabla_{\M} \tilde{\u},\nabla_{\M} \cdot(\nabla_{\M} \tilde{\u}), \ldots)$ for one time step of size $\Delta t$ using the $\text{CPM}_{\M}:$
\begin{itemize}
\item {\it Evolution.} For $\x \in \Omega_c$ solve
\begin{equation*} 
\left\{ \begin{aligned}
&\frac{\partial \tilde{\u}}{\partial t}(\x,t) = \mathbf{F}(\x,\tilde{\u}(\x,t), \nabla \tilde{\u}(\x,t), \nabla \cdot( \nabla \tilde{\u}(\x,t)), \ldots),\\
&\tilde{\u} (\x,0) = \u^k(\x),\\
\end{aligned}\right.
\end{equation*} 
for one time step.
\item {\it Closest point extension.} Set $\u_{\text{ext}}(\x) =  \tilde{\u}(\cp_{\M}(\x),\Delta t).$
\end{itemize}

\STATE 2. Project $\u_{\text{ext}}(\x)$ onto $\N$ by setting $\u^{k+1}(\x) = \cp_{\N}(\u_{\text{ext}}(\x)).$
\end{algorithmic}
\end{algorithm}

In this paper, we apply forward Euler time-stepping in Step 1 of the $\text{CPM}_{\M}^{\N},$ however, other explicit~\cite{Ruuth2008} or implicit~\cite{Macdonald2009} choices may be used. Note also that the homogeneous Neumann boundary conditions in~(\ref{proj_graddes}) are imposed automatically by the $\text{CPM}_{\M}$~\cite{Ruuth2008}. Therefore, Step 1 of the $\text{CPM}_{\M}^{\N}$ does not involve direct implementation of boundary conditions when $\M$ is an open manifold (i.e., a manifold with boundaries). 

In the harmonic mapping case~(\ref{proj_graddes}), $\mathbf{F} = \Delta_{\M} \u.$ Another important special case is the $p$-harmonic maps. The extremizing functions $\u:\M\rightarrow \N$ of the energy
\begin{equation}
E_p[\u] = \int_{\M} e_p[\u] dv_{\M},
\label{pharm_en}
\end{equation}
with $1\leq p <\infty$ and
\begin{equation*}
e_p[\u] = \frac{1}{p} \left\|\J_{\u}^{\d_{\M}}\right\|^p_{\mathcal{F}},
\end{equation*}
are called $p$-harmonic maps. The gradient descent flow for the energy~(\ref{pharm_en}) is~\cite{Memoli2004},
 \begin{equation}
\left\{ \begin{aligned}
&\frac{\partial \u}{\partial t} = p^{1-\frac{2}{p}}\;\Pi_{T_{\u} \N} \left(\nabla \cdot \left(\left(e_p[\u]\right)^{1-\frac{2}{p}} \J_{\u}^{\d_{\M}}\right)\right),\\
&\u (\x,0) = \u^0 (\x),\\
&\J_{\u}^{\d_{\M}} \mathbf{n}|_{\partial \M} = 0,
\end{aligned} \right.
\label{pharm_proj_graddes}
\end{equation}
where the divergence of the matrix is defined as the divergence of each row of the matrix. Noting that $\J_{\u}^{\d_{\M}} = (\nabla_{\M} \u)^T,$ we obtain the embedding form of~(\ref{pharm_proj_graddes})
 \begin{equation}
 \frac{\partial \u}{\partial t} = p^{1-\frac{2}{p}}\;\J_{\cp_{\N}(\u)} \left(\nabla \cdot \left( \left(\frac{1}{p} \left\|(\nabla \u(\cp_{\M}))^T\right\|^p_{\mathcal{F}}\right)^{1-\frac{2}{p}} (\nabla \u(\cp_{\M}))^T\right)\right),
\label{emb_pharm_proj_graddes}
\end{equation}
which can be evolved using the $\text{CPM}_{\M}^{\N}$ (Algorithm~\ref{alg:cpmmm}).
  
We conclude this subsection by showing the consistency of the $\text{CPM}_{\M}^{\N}$ applied to~(\ref{gen_proj}) in Theorem~\ref{theo1}. The proof of Theorem~\ref{theo1} uses the following lemma, which is a specific case of Taylor's theorem~\cite{Coleman2012} for normed vector spaces.
\begin{lem}
Let $A$ and $B$ be normed vector spaces and $A_O$ an open subset of $A.$ Suppose that  $\mathbf{a} \in A_O$ and $\mathbf{h}\in A$ such that the segment $[\mathbf{a}, \mathbf{a}+\mathbf{h}]\in A_O.$ Let $\mathbf{f}:A_O\rightarrow B$ be a $C^1$ mapping whose Hessian, $\mathbf{H}_{\mathbf{f}},$ is finite. Then,
\begin{equation*}
\mathbf{f}(\mathbf{a}+\mathbf{h}) = \mathbf{f}(\mathbf{a}) + \J_{\mathbf{f}(\mathbf{a})} \mathbf{h} + \mathcal{O}\left(\|\mathbf{h}\|^2\right).
\end{equation*}
\label{lem1}
\end{lem}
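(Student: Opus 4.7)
The plan is to reduce this multivariate Taylor statement to a one-dimensional calculation by restricting $\mathbf{f}$ to the line segment joining $\mathbf{a}$ and $\mathbf{a}+\mathbf{h}$. First I would introduce the auxiliary map $\mathbf{g}:[0,1]\to B$ defined by $\mathbf{g}(t) = \mathbf{f}(\mathbf{a}+t\mathbf{h})$, which is well-defined since the segment is contained in the open set $A_O$, and which is $C^1$ by the chain rule with derivative $\mathbf{g}'(t) = \J_{\mathbf{f}(\mathbf{a}+t\mathbf{h})} \mathbf{h}$.

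Next I would apply the fundamental theorem of calculus to $\mathbf{g}$ (valid for $B$-valued maps with continuous derivative on a compact interval), obtaining
\begin{equation*}
\mathbf{f}(\mathbf{a}+\mathbf{h}) - \mathbf{f}(\mathbf{a}) = \int_0^1 \J_{\mathbf{f}(\mathbf{a}+t\mathbf{h})} \mathbf{h}\, dt.
\end{equation*}
Adding and subtracting the Jacobian evaluated at $\mathbf{a}$ then isolates the linear term and a remainder,
\begin{equation*}
\mathbf{f}(\mathbf{a}+\mathbf{h}) = \mathbf{f}(\mathbf{a}) + \J_{\mathbf{f}(\mathbf{a})}\mathbf{h} + \int_0^1 \bigl(\J_{\mathbf{f}(\mathbf{a}+t\mathbf{h})} - \J_{\mathbf{f}(\mathbf{a})}\bigr)\mathbf{h}\, dt.
\end{equation*}
To bound the remainder by $\mathcal{O}(\|\mathbf{h}\|^2)$, I would apply the mean value inequality to the map $\mathbf{s}\mapsto \J_{\mathbf{f}(\mathbf{s})}$, whose derivative is the Hessian $\H_{\mathbf{f}}$. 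The hypothesis that the Hessian is finite furnishes a bound $\|\H_{\mathbf{f}}\|\leq M$ along the segment, yielding $\|\J_{\mathbf{f}(\mathbf{a}+t\mathbf{h})} - \J_{\mathbf{f}(\mathbf{a})}\| \leq M t\|\mathbf{h}\|$ in operator norm, so the remainder integrand has norm at most $M t\|\mathbf{h}\|^2$ and the remainder itself is bounded by $\tfrac{1}{2}M\|\mathbf{h}\|^2$.

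The main obstacle I anticipate is making the Hessian bound fully rigorous in the normed vector space setting: the Hessian lives in the space of continuous bilinear maps $A\times A\to B$, and some care is needed to argue that its finiteness controls the operator-norm variation of the Jacobian along the segment (essentially a mean value theorem for Banach-space-valued $C^1$ maps). Once that regularity step is in hand the integral estimate above is routine, and the conclusion follows directly.
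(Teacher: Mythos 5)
Your argument is correct, but it is worth noting that the paper does not actually prove this lemma at all: it simply states it as ``a specific case of Taylor's theorem for normed vector spaces'' and cites a reference, so you have supplied a proof where the authors supplied a citation. What you give is the standard integral-remainder proof: restrict to the segment via $\mathbf{g}(t)=\mathbf{f}(\mathbf{a}+t\mathbf{h})$, apply the fundamental theorem of calculus, peel off the linear term, and bound the remainder by the Lipschitz estimate $\|\J_{\mathbf{f}(\mathbf{a}+t\mathbf{h})}-\J_{\mathbf{f}(\mathbf{a})}\|\leq Mt\|\mathbf{h}\|$ obtained from the mean value inequality applied to $\mathbf{s}\mapsto\J_{\mathbf{f}(\mathbf{s})}$ with the Hessian bound $M$. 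This is exactly the content of the cited theorem, and your self-aware caveat about where the care is needed is the right one: the Hessian is a bounded bilinear map and the mean value inequality for Banach-space-valued maps (provable via Hahn--Banach) is what turns its finiteness into the operator-norm estimate. Two small technical remarks. First, the Riemann integral of a continuous $B$-valued function on $[0,1]$ requires completeness of $B$ in general; you can sidestep this entirely by applying the mean value inequality directly to the remainder $\mathbf{R}(t)=\mathbf{g}(t)-\mathbf{g}(0)-t\,\mathbf{g}'(0)$, whose derivative satisfies $\|\mathbf{R}'(t)\|\leq Mt\|\mathbf{h}\|^2$, giving $\|\mathbf{R}(1)\|\leq M\|\mathbf{h}\|^2$ without any integral. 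Second, the hypothesis as stated in the lemma ($C^1$ with ``finite Hessian'') is itself slightly loose --- the Hessian must exist along the segment for your estimate to apply --- but your reading of it is the one the paper intends, and in the paper's actual application $A$ and $B$ are finite-dimensional Euclidean spaces, where none of these subtleties bite.
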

\begin{theo}
Let $\M\subset \mathbb{R}^m$ be a smooth manifold. Suppose in a neighbourhood of $\M\times[0,\Delta t]$ that $\tilde{\u}: \mathbb{R}^m \times \mathbb{R} \rightarrow \mathbb{R}^n$ is a $C^1$ mapping with a finite Hessian. Further assume that  $\textnormal{cp}_{\N}:\mathbb{R}^n \rightarrow \mathbb{R}^n$ is a $C^1$ mapping with a finite Hessian in a neighbourhood of $\N.$ Then, the $\textnormal{CPM}_{\M}^{\N}$ (Algorithm~\ref{alg:cpmmm}) is consistent with the PDE~(\ref{gen_proj}) for any $\x \in \M.$
\label{theo1}
\end{theo}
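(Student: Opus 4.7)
The plan is to trace one step of Algorithm~\ref{alg:cpmmm} for a grid node $\x\in\M$, expand the intermediate quantity $\u_{\text{ext}}(\x)$ in powers of $\Delta t$ using Lemma~\ref{lem1}, and then expand the projection $\cp_{\N}(\u_{\text{ext}}(\x))$ again using Lemma~\ref{lem1}, so that the resulting update matches one forward Euler step of~(\ref{gen_proj}) up to an $\mathcal{O}(\Delta t^2)$ truncation error.

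First I would fix $\x\in\M$ and use that $\cp_{\M}(\x)=\x$, together with the fact that $\u^k$ was produced by the previous projection step (or is the prescribed initial data) so that $\u^k(\x)\in\N$. Then the closest point extension at the end of Step~1 gives $\u_{\text{ext}}(\x)=\tilde{\u}(\x,\Delta t)$. Applying Lemma~\ref{lem1} in the time variable to $\tilde{\u}$ at the point $(\x,0)$ with increment $(\mathbf{0},\Delta t)$, and substituting the evolution equation for $\partial_t\tilde{\u}$, yields
\begin{equation*}
\u_{\text{ext}}(\x)=\u^k(\x)+\Delta t\,\mathbf{F}\bigl(\x,\u^k(\x),\nabla\u^k(\x),\nabla\cdot(\nabla\u^k(\x)),\ldots\bigr)+\mathcal{O}(\Delta t^2).
\end{equation*}
Because $\u^k$ was built as a closest point extension on $\Omega_c$, it is constant along normal directions of $\M$, so by Principles~\ref{prin1} and~\ref{prin2} the Cartesian derivatives appearing inside $\mathbf{F}$ can be identified with the intrinsic surface derivatives on $\M$; this rewrites the linear term as $\Delta t\,\mathbf{F}(\x,\u^k,\nabla_{\M}\u^k,\nabla_{\M}\!\cdot(\nabla_{\M}\u^k),\ldots)$, which I will denote by $\Delta t\,\mathbf{F}_k$ for brevity.

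Next, I would apply Step~2. Since $\u^k(\x)\in\N$, we have $\cp_{\N}(\u^k(\x))=\u^k(\x)$, and the identity $\J_{\cp_{\N}(\u)}=\Pi_{T_{\u}\N}$ for $\u\in\N$ (recorded in the paper from~\cite{Marz2012,Moser2005}) is available at $\u^k(\x)$. Using Lemma~\ref{lem1} on $\cp_{\N}$ at the point $\u^k(\x)$ with increment $\mathbf{h}=\Delta t\,\mathbf{F}_k+\mathcal{O}(\Delta t^2)$ gives
\begin{equation*}
\u^{k+1}(\x)=\cp_{\N}(\u_{\text{ext}}(\x))=\u^k(\x)+\Delta t\,\Pi_{T_{\u^k(\x)}\N}(\mathbf{F}_k)+\mathcal{O}(\Delta t^2),
\end{equation*}
where the quadratic remainder from Lemma~\ref{lem1} is $\mathcal{O}(\|\mathbf{h}\|^2)=\mathcal{O}(\Delta t^2)$ because the Hessian of $\cp_{\N}$ is finite on the required neighbourhood. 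Rearranging,
\begin{equation*}
\frac{\u^{k+1}(\x)-\u^k(\x)}{\Delta t}=\Pi_{T_{\u^k(\x)}\N}(\mathbf{F}_k)+\mathcal{O}(\Delta t),
\end{equation*}
which is precisely a forward Euler discretization of~(\ref{gen_proj}) at $\x\in\M$, establishing consistency.

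The main obstacle I anticipate is the careful bookkeeping of the two nested Taylor expansions: one has to track that the $\mathcal{O}(\Delta t^2)$ inside $\mathbf{h}$ does not get amplified when pushed through $\J_{\cp_{\N}}$ and that the quadratic remainder in Lemma~\ref{lem1} applied to $\cp_{\N}$ remains $\mathcal{O}(\Delta t^2)$ uniformly in $\x\in\M$; this is where the hypotheses that $\tilde{\u}$ and $\cp_{\N}$ are $C^1$ with finite Hessians on the stated neighbourhoods are essential. A secondary subtlety is justifying the replacement of $\nabla\u^k$ by $\nabla_{\M}\u^k$ inside $\mathbf{F}$, which requires invoking Principles~\ref{prin1}--\ref{prin2} together with the fact that $\u^k$ arose from a closest point extension, so this step of the argument is really an inductive assumption that the algorithm maintains a valid closest point representation between time steps.
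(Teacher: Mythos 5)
Your proposal is correct and follows essentially the same route as the paper's proof: two nested applications of Lemma~\ref{lem1} (first to $\tilde{\u}$ in time, then to $\cp_{\N}$), the substitution of the evolution equation for $\partial_t\tilde{\u}$, the identities $\cp_{\N}(\u^k)=\u^k$ and $\J_{\cp_{\N}(\u^k)}=\Pi_{T_{\u^k}\N}$, and Principles~\ref{prin1}--\ref{prin2} to pass between Cartesian and intrinsic derivatives. The only cosmetic difference is that you fix $\x\in\M$ at the outset (so $\cp_{\M}(\x)=\x$) whereas the paper carries $\cp_{\M}(\x)$ through and restricts to $\M$ at the end.
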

 \begin{proof}
Let $A = \mathbb{R}^m \times \mathbb{R},$ $B = \mathbb{R}^n,$ $\mathbf{a}=(\cp_{\M}(\x),0),$ $\mathbf{h} = (0,\Delta t),$ and $\x\in\mathbb{R}^m.$ Let $A_O$ be a neighbourhood of $\M \times [0,\Delta t]$ where $\tilde{\u}$ is $C^1$ and $\H_{\tilde{u}}$ is finite. Using Lemma~\ref{lem1}, we expand $\tilde{\u}(\cp_{\M}(\x),\Delta t)$ and substitute $\partial \tilde{\u}( \cp_{\M}(\x),0)/\partial t= \mathbf{F}\left( \cp_{\M}(\x),\tilde{\u}( \cp_{\M}(\x),0), \nabla \tilde{\u}(\cp_{\M}(\x),0), \ldots \right)$ to obtain
\begin{equation*}
\u_{\text{ext}}(\x) = \tilde{\bf u}(\cp_{\M}(\x),\Delta t) = \tilde{\u}(\cp_{\M}(\x),0) + \Delta t \F(\cp_{\M}(\x),\tilde{\u}(\cp_{\M}(\x),0), \nabla \tilde{\u}(\cp_{\M}(\x),0), \ldots) + \mathcal{O}\left(\Delta t^2\right). 
\end{equation*}
 The numerical approximation at time $t^{k+1}$ can therefore be expressed as
\begin{align*}
\u^{k+1}(\x) = \cp_{\N}\left(\tilde{\u}(\cp_{\M}(\x),0) + \Delta t \F(\cp_{\M}(\x),\tilde{\u}(\cp_{\M}(\x),0), \nabla \tilde{\u}(\cp_{\M}(\x),0), \ldots)+\mathcal{O}\left(\Delta t^2\right)\right). 
\end{align*}
Applying Lemma~\ref{lem1} to expand the closest point function, with $A = B =  \mathbb{R}^n,$ $\mathbf{a}=\tilde{\u}(\cp_{\M}(\x),0),$ and $\mathbf{h} = \Delta t \F\left(\cp_{\M}(\x)\right. ,$ \\  $\left. \tilde{\u}( \cp_{\M}(\x),0),\nabla \tilde{\u}(\cp_{\M}(\x),0), \ldots\right)+\mathcal{O}\left(\Delta t^2\right),$ yields 
\begin{equation*}
\begin{aligned}
\u^{k+1}(\x) &=  \cp_{\N}(\tilde{\u}(\cp_{\M}(\x),0))\\
& +\J_{\cp_{\N}(\tilde{\u}( \cp_{\M}(\x),0))} \left(\Delta t \F\left(\cp_{\M}(\x),\tilde{\u}( \cp_{\M}(\x),0),\nabla \tilde{\u}(\cp_{\M}(\x),0), \ldots\right)+\mathcal{O}\left(\Delta t^2\right)\right) + \mathcal{O}\left(\Delta t^2\right),\\
&=  \u^k(\cp_{\M}(\x))+\Delta t \J_{\cp_{\N}\left(\u^k( \cp_{\M}(\x))\right)} \left( \F\left( \cp_{\M}(\x),\u^k( \cp_{\M}(\x)), \nabla \u^k(\cp_{\M}(\x)), \ldots\right)\right) +\mathcal{O}\left(\Delta t^2\right),
\end{aligned} 
\end{equation*}
where we have used $\tilde{\u}(\x,0) = \u^k(\x)$ and $\cp_{\N}(\u^k) = \u^k.$ We apply $\J_{\cp_{\N}\left(\u^k\right)} = \Pi_{T_{\left(\u^k\right)}\N}$ and Principles~\ref{prin1} and~\ref{prin2} of the $\text{CPM}_{\M}$ to obtain
\begin{equation*}
\u^{k+1} =  \u^k +\Delta t \Pi_{T_{\left(\u^k\right)}\N} \left(\F\left( \x,\u^k, \nabla_{\M} \u^k, \nabla_{\M} \cdot (\nabla_{\M} \u^k), \ldots\right)\right) +\mathcal{O}\left(\Delta t^2\right),
\end{equation*}
 which holds for any $\x\in\M.$ Rearranging and taking the limit as $\Delta t \rightarrow 0$ obtains the desired result
\begin{equation*}
\frac{\partial \u}{\partial t} =  \Pi_{T_{\u}\N} (\mathbf{F}(\x,\u, \nabla_{\M} \u, \nabla_{\M} \cdot (\nabla_{\M} \u), \ldots)).
\label{equi}
\end{equation*} 
\end{proof}
  
In summary, we may evolve~(\ref{gen_proj}) by alternating between a step of  PDE evolution on $\M$ (via the $\text{CPM}_{\M}$) and an evaluation of the closest point function for $\N.$ Properties of this closest point method for manifold mapping, $\text{CPM}_{\M}^{\N}$, are considered in some detail next. Particular attention will be paid to the performance of the algorithm relative to its closest algorithmic companion, the level set method for manifold mapping, $\text{LSM}_{\M}^{\N}$.


\section{A comparison: the closest point and level set methods for manifold mapping}
\label{sec:compare}
In this section we compare the closest point and level set methods for manifold mapping, i.e., the $\text{CPM}_{\M}^{\N}$ and the $\text{LSM}_{\M}^{\N}$. The comparison is performed for the problem of computing harmonic maps $\u(\x): \M \rightarrow \N.$  Both methods compute the harmonic map by numerically approximating the gradient descent flow
\begin{equation}
\frac{\partial \u}{\partial t} = \Pi_{T_{\u} \N} ( \Delta_{\M} \u),
\label{harm_grad}
\end{equation} 
until steady state. 

\subsection{A discretization for harmonic maps}
To begin, we select a uniform computational grid $\Omega_{c}$ surrounding the manifold $\M$.  Assume $\M$ and $\N$ are manifolds embedded in $\mathbb{R}^m$ and $\mathbb{R}^n,$ respectively. Then $\x = (x_1, x_2, \ldots, x_m)^T \in \M$ and $\u(\x) = (u_1(\x), u_2(\x),\ldots, u_n(\x))^T \in \N.$ For the discretization of~(\ref{harm_grad}), denote discrete point locations by $\x_j$ and the approximate solution at time $t^k$ by $\u^k.$

We now compare and contrast the $\text{CPM}_{\M}^{\N}$  and the $\text{LSM}_{\M}^{\N}$. The derivation of the $\text{CPM}_{\M}^{\N}$ defines an embedding PDE~(\ref{emb_proj_graddes}) from~(\ref{harm_grad}) using 
\begin{enumerate}
\item   $\Pi_{T_{\u} \N} = \J_{\cp_{\N}(\u)}$ for any $\u \in \N,$
\item $\Delta_{\M} \u(\x) = \Delta \u(\cp_{\M}(\x))$ on $\M.$
\end{enumerate}
By splitting the evolution of the embedding PDE and replacing the projection by a closest point evaluation, we obtain the $\text{CPM}_{\M}^{\N}$. In contrast, the embedding PDE for the $\text{LSM}_{\M}^{\N}$ is derived using different properties based on the level set functions $\phi$ and $\psi$ representing $\M$ and $\N,$ respectively. Specifically, the $\text{LSM}_{\M}^{\N}$ uses
\begin{enumerate}
\item   $\Pi_{T_{\u} \N} = \Pi_{\nabla \psi(\u)}$ for any $\u \in \N,$
\item $\Delta_{\M} \u(\x) = \nabla \cdot (\Pi_{\nabla \phi(\x)} \nabla \u(\x))$ on $\M.$
\end{enumerate}

Denote forward and backward discretizations of the gradient by  $\nabla^+$ and $\nabla^-,$ respectively. Further denote the discretization of the Laplacian as $\Delta_0.$ Using forward Euler time-stepping, the $\text{CPM}_{\M}^{\N}$ (Algorithm~\ref{alg:cpmmm}) and the $\text{LSM}_{\M}^{\N}$ for~(\ref{harm_grad}) can be implemented as
\begin{itemize}
\item $\text{CPM}_{\M}^{\N}$: 
\begin{equation}
\u^{k+1}(\x_j) = \cp_{\N}\left(\u^k(\cp_{\M}(\x_j)) + \Delta t \left(\Delta_0 \u^k(\cp_{\M}(\x_j))\right)\right),
\label{num_cp}
\end{equation}
\item $\text{LSM}_{\M}^{\N}$: 
\begin{equation*}
\u^{k+1}(\x_j) = \u^k(\x_j) + \Delta t \mathbf{P}\left(\u^k(\x_j)\right) \left(\nabla^- \cdot \left(\mathbf{Q}(\x_j) \nabla^+ \u^k(\x_j) \right)  \right),
\end{equation*}
\end{itemize}
where $\mathbf{Q}$ and $\mathbf{P}$ are respectively discrete representations of $\Pi_{\nabla \phi}$ and $\Pi_{\nabla \psi}.$ Truncation errors in the $\text{LSM}_{\M}^{\N}$ can cause $\u^{k+1}(\x_j)$ to leave the target manifold $\N.$ M\'emoli et al.~\cite{Memoli2004} address this problem by projecting the solution back onto the target manifold $\N$ after each time step. The $\text{LSM}_{\M}^{\N}$ implemented in practice is therefore
\begin{equation}
\u^{k+1}(\x_j) = \cp_{\N}\left(\u^k(\x_j) + \Delta t \mathbf{P}\left(\u^k(\x_j)\right) \left(\nabla^- \cdot \left(\mathbf{Q}(\x_j) \nabla^+ \u^k(\x_j) \right)  \right)\right).
\label{ls_prac_imp}
\end{equation}

\subsection{Comparison of the methods}
Seven different areas of comparison between the $\text{CPM}_{\M}^{\N}$ and the $\text{LSM}_{\M}^{\N}$ are considered: manifold generality, implementation difficulty, convergence, computational work, memory requirements, accuracy, and convergence rate. The $\text{CPM}_{\M}^{\N}$ is better in all these aspects as will be discussed below. 

The $\text{CPM}_{\M}^{\N}$ can handle more complex surface geometries. A closest point function can be defined for manifolds that are open or closed, with or without orientation, and of any codimension. A level set function is most natural for closed manifolds of codimension-one. It can be extended to more general manifolds but this requires multiple level set functions, which complicates implementation and analysis (see, e.g., Section 3 of~\cite{Memoli2004}). 

The implementation of the $\text{CPM}_{\M}^{\N}$ is simpler since it does not form the projection matrix $\mathbf{P}.$ Also, for the harmonic mapping problem, one can discretize the Laplacian directly instead of discretizing gradients and forming $ \mathbf{Q}.$ A further reason the $\text{CPM}_{\M}^{\N}$ is easier to implement is that its closest point extension step involves only standard interpolation. The $\text{LSM}_{\M}^{\N}$ performs an ``extension evolution'' to extend surface data such that $\nabla \u \cdot \nabla \phi = 0.$ Typically, this step is carried out via a fast marching method or by evolving the gradient descent flow 
\begin{equation}
\frac{\partial \u}{\partial t} + \text{sign}(\phi)(\nabla \u \cdot \nabla \phi) = 0,
\label{reext}
\end{equation}
 to steady state. 
Note that, for some specific PDEs, the extension evolution for the $\text{LSM}_{\M}$ is only required once to prepare the initial data~\cite{Bertalmio2001}.  See also \cite{Greer2006}, where a modified projection matrix $\mathbf{Q}$ is introduced into
the $\text{LSM}_{\M}$ to yield a method that does not require any data re-extension. 
In contrast, we emphasize that the $\text{CPM}_{\M}$ requires a closest point extension at every time step.
 
For stationary surfaces, the extension evolution step for the $\text{LSM}_{\M}^{\N}$ can be computed efficiently using the closest point extension.
Computational efficiency is prioritized over memory use in our numerical examples. Therefore, we use the closest point extension in the $\text{LSM}_{\M}^{\N}.$  In our implementation, the closest point extension is a small part of the overall computational cost of the $\text{LSM}_{\M}^{\N}$.  For example, it accounts for less than 1.4\% of the cost in the $\text{LSM}_{\M}^{\N}$ computations described in Section~\ref{subsec:ident}.

The $\text{CPM}_{\M}^{\N}$ performs better with respect to convergence. Theoretically, when applied to the harmonic mapping problem, the $\text{LSM}_{\M}^{\N}$ leads to degenerate PDEs~\cite{Greer2006}. This degeneracy can have an adverse effect on discretizations and little is known about the convergence of such schemes~\cite{Greer2006}. In contrast, the $\text{CPM}_{\M}^{\N}$ involves standard heat flow in its evolution step. This is discretized using standard Cartesian numerical methods in the embedding space. We further note that the $\text{CPM}_{\M}^{\N}$ has superior stability characteristics in practice.

Computational work per step is another natural area for comparison between the $\text{CPM}_{\M}^{\N}$ and the $\text{LSM}_{\M}^{\N}$. Comparing~(\ref{num_cp}) with the implemented version of the $\text{LSM}_{\M}^{\N}$, (\ref{ls_prac_imp}), we need only consider expressions within $\cp_{\N}(\cdot ).$ For the $\text{CPM}_{\M}^{\N}$~(\ref{num_cp}) we have a discretization on the manifold $\M$ alone:  $$\u^k(\cp_{\M}(\x_j)) + \Delta t \left(\Delta_0 \u^k(\cp_{\M}(\x_j))\right).$$ The $\text{LSM}_{\M}^{\N},$ however, is a discretization of the original PDE~(\ref{harm_grad}) between two manifolds: $$\u^k(\x_j) + \Delta t \mathbf{P}\left(\u^k(\x_j)\right) \left(\nabla^{-} \cdot \left(\mathbf{Q}(\x_j) \nabla^{+} \u^k(\x_j) \right)  \right),$$  which involves the obvious added work of constructing and applying the projection matrix $\mathbf{P}\left(\u^k(\x_j)\right).$ Moreover, two discretization matrices are applied, $\nabla^{-}$ and $\nabla^{+},$ instead of the one $\Delta_0.$ The $\text{LSM}_{\M}^{\N}$ also requires, in general, an extension evolution to be performed every $\ell$ time steps ($\ell=5$ in~\cite{Memoli2004}) to obtain stable results. This extension evolution step not only adds work, but is a further source of error. Finally, we note that the $\text{CPM}_{\M}^{\N}$ uses a relatively small, analytically defined computational band. This yields further computational savings over the $\text{LSM}_{\M}^{\N}$; see Section~\ref{subsec:ident} for further details.

The memory requirements per time step of the $\text{CPM}_{\M}^{\N}$ are less than that of the $\text{LSM}_{\M}^{\N}.$ 
Both the $\text{CPM}_{\M}^{\N}$ and $\text{LSM}_{\M}^{\N}$ store the solution vector $\u^k,$ the closest point function $\cp_{\N},$ and a matrix, $\mathbf{E},$ that applies the closest point extension for $\M.$ 
The $\text{CPM}_{\M}^{\N}$ stores one tridiagonal discretization matrix, while  the $\text{LSM}_{\M}^{\N}$ stores two bidiagonal discretization matrices. 
In addition, the $\text{LSM}_{\M}^{\N}$ requires the storage of $\mathbf{Q},$ $\mathbf{P}(\u^k),$ as well as the matrices used to form $\mathbf{P}(\u^k)$ at each time step. 
This last group of matrices depends on how $\mathbf{P}(\u^k)$ is formed, e.g., via interpolation of $\psi$ (which is generally more efficient) or interpolation of $\mathbf{P}(\u^0).$ Finally, we observe that all the matrices for the $\text{LSM}_{\M}^{\N}$ are larger than the $\text{CPM}_{\M}^{\N}$ matrices because the $\text{LSM}_{\M}^{\N}$ requires a larger computational band to obtain the expected convergence rate; see Section~\ref{subsec:ident}. 

Accuracy and convergence rate are our final areas for comparison. In practical computations embedding methods must localize computations to a band around the manifold. In the $\text{LSM}_{\M}^{\N}$ this leads to the imposition of artificial boundary conditions that degrade the accuracy and convergence rate~\cite{Greer2006}. The $\text{CPM}_{\M}^{\N}$ obtains values at the boundary of the band from the manifold as part of the closest point extension step.  

Accuracy, convergence and computational work in practice are also of interest; we compare these next in numerical experiments for the problem of computing identity maps.

\subsubsection{Identity maps}
\label{subsec:ident}
The identity map $\u(\x): \M \rightarrow \N,$ with $\M=\N,$ is a harmonic map~\cite{JostBook2011}. Identity maps are a natural choice for conducting convergence studies since the exact harmonic map, $\u(\x) = \x,$ is known. We now provide convergence studies of identity map computations for the unit sphere, an ellipsoid, and a torus. 

In each case, an initial noisy map $\u^0(\x)$ is evolved to steady state.  To construct $\u^0(\x),$ a normally distributed random map ${\bf r}(\x)$ in $\mathbb{R}^3$ is added to points on $\N.$ The points are then projected back onto $\N$ using  $\cp_{\N},$ i.e.,  
\begin{equation*}
\u^0(\x) = \cp_{\N} ({\u}(\x) + {\bf r}(\x)).
\end{equation*} 
Each component of the random map ${\bf r}(\x)$ is set equal to $\alpha \cdot \texttt{randn},$ where \texttt{randn} is a M\textsc{atlab} command that returns a random scalar drawn from the standard normal distribution. The scalar $\alpha$  is a constant that controls the size of the noise; the value $\alpha=0.05$ is selected in the following convergence studies. 

The addition of random noise produces different convergence rates in each experiment. Tables~\ref{tab:ls_cp_sphere_sphere}-\ref{tab:ls_cp_torus_torus} therefore show convergence rates averaged over 96 realizations of the computation. Second-order centred finite differences were used for $\Delta_0$ and first-order differences were used for $\nabla^+$ and $\nabla^-.$ The time step-size was $\Delta t = 0.1 \Delta x^2,$ where $\Delta x$ is the spatial step-size. The maximum Euclidean distance between $\u^k$ and $\u$ over nodes $\mathbf{z}_j \in \M$ was used as the error,  i.e., $$\text{Error} = \hspace{0.01cm} \max_{\mathbf{z}_j}   \|\u^k(\mathbf{z}_j) - \mathbf{z}_j\|_2.$$  The errors from the $\text{CPM}_{\M}^{\N}$~(\ref{num_cp}) and the $\text{LSM}_{\M}^{\N}$~(\ref{ls_prac_imp}) were computed at the final time $t_f = 0.01.$ See Tables~\ref{tab:ls_cp_sphere_sphere}-\ref{tab:ls_cp_torus_torus} for the results.
\begin{table}
\centering
\begin{tabular}{p{1.25cm} K{1.75cm} K{1.75cm} K{1.5cm} K{1.5cm} K{1.75cm} K{1.75cm} c}
\hline
$\Delta x$ &\multicolumn{2}{c}{Error} & \multicolumn{2}{c}{Conv. rate} & \multicolumn{2}{c}{Comp. time} & Speedup\\
\hline
& $\text{LSM}_{\M}^{\N}$ & $\text{CPM}_{\M}^{\N}$ & $\text{LSM}_{\M}^{\N}$ &$\text{CPM}_{\M}^{\N}$  & $\text{LSM}_{\M}^{\N}$ & $\text{CPM}_{\M}^{\N}$& \\
\hline
$0.2$ & 7.54e$-$02  & 5.28e$-$02 &  & & 0.997 s & 0.049 s &20.5\\
$0.1$ & 3.60e$-$02 &  2.66e$-$02 & $1.07$& $0.99$ & 5.214 s & 0.539 s &9.7\\
$0.05$ & 1.65e$-$02 &  1.39e$-$02 &$1.12$ & $0.94$ & 69.15 s  & 8.875 s & 7.8\\
$0.025$ & 7.93e$-$03 &  6.74e$-$03 & $1.05$& $1.05$ & 1091 s & 143.4 s &7.6\\
$0.0125$ & 4.06e$-$03 & 3.50e$-$03 &$0.97$ & $0.94$ & 18326 s &  2458 s & 7.5\\
\hline
\end{tabular}
\caption{Convergence study for a unit sphere identity map using the $\text{LSM}_{\M}^{\N}$  and the $\text{CPM}_{\M}^{\N}$ with $t_f = 0.01.$ }
\label{tab:ls_cp_sphere_sphere}
\end{table}

\begin{table}
\centering
\begin{tabular}{p{1.25cm} K{1.75cm} K{1.75cm} K{1.5cm} K{1.5cm} K{1.75cm} K{1.75cm} c}
\hline
$\Delta x$ &\multicolumn{2}{c}{Error} & \multicolumn{2}{c}{Conv. rate} & \multicolumn{2}{c}{Comp. time} &Speedup\\
\hline
& $\text{LSM}_{\M}^{\N}$ & $\text{CPM}_{\M}^{\N}$ & $\text{LSM}_{\M}^{\N}$ &$\text{CPM}_{\M}^{\N}$  & $\text{LSM}_{\M}^{\N}$ & $\text{CPM}_{\M}^{\N}$& \\
\hline
$0.2$ & 7.30e$-$02& 5.26e$-$02 & & &  2.857 s &0.216 s & 13.2 \\
$0.1$ & 3.37e$-$02& 2.64e$-$02 & $1.11$ & $0.99$ & 17.45 s &2.231 s  & 7.8\\
$0.05$ & 1.61e$-$02& 1.37e$-$02 & $1.06$ &  $0.95$ & 191.7 s & 33.85 s &5.7\\
$0.025$ & 7.91e$-$03 & 6.80e$-$03 & $1.02$ & $1.01$ & 2730 s & 534.9 s &5.1\\
$0.0125$ & 4.01e$-$03& 3.39e$-$03 & $0.99$ & $1.01$ & 39922 s & 8532 s  &4.7\\
\hline
\end{tabular}
\caption{Convergence study for an ellipsoid identity map using the $\text{LSM}_{\M}^{\N}$  and the $\text{CPM}_{\M}^{\N}$ with $t_f = 0.01.$}
\label{tab:ls_cp_ellipsoid_ellipsoid}
\end{table}

\begin{table}
\centering
\begin{tabular}{p{1.25cm} K{1.75cm} K{1.75cm} K{1.5cm} K{1.5cm} K{1.75cm} K{1.75cm} c}
\hline
$\Delta x$ &\multicolumn{2}{c}{Error} & \multicolumn{2}{c}{Conv. rate} & \multicolumn{2}{c}{Comp. time} & Speedup\\
\hline
& $\text{LSM}_{\M}^{\N}$ & $\text{CPM}_{\M}^{\N}$ & $\text{LSM}_{\M}^{\N}$ &$\text{CPM}_{\M}^{\N}$  & $\text{LSM}_{\M}^{\N}$ & $\text{CPM}_{\M}^{\N}$& \\
\hline
$0.2$ &7.33e$-$02 & 5.57e$-$02 & & &1.056 s &  0.058 s & 18.1\\
$0.1$ &3.44e$-$02 &  2.71e$-$02 &$1.09$ &$1.04$ & 6.143 s& 0.689 s & 8.9 \\
$0.05$ & 1.68e$-$02& 1.38e$-$02 & $1.03$ & $0.98$ & 77.47 s& 11.49 s & 6.7\\
$0.025$ &8.06e$-$03 & 6.90e$-$03 & $1.06$& $1.00$ & 1303  s& 220.6 s & 5.9\\
$0.0125$ & 4.07e$-$03 & 3.48e$-$03 &$0.99$ & $0.99$ & 23916 s& 3134 s & 7.6\\
\hline
\end{tabular}
\caption{Convergence study for a torus identity map using the $\text{LSM}_{\M}^{\N}$  and the $\text{CPM}_{\M}^{\N}$ with $t_f = 0.01.$ }
\label{tab:ls_cp_torus_torus}
\end{table}

With respect to accuracy we observe that the $\text{CPM}_{\M}^{\N}$ is the decisive winner. The convergence rates of the $\text{CPM}_{\M}^{\N}$ and the $\text{LSM}_{\M}^{\N}$ are similar for all the manifolds: Tables~\ref{tab:ls_cp_sphere_sphere}-\ref{tab:ls_cp_torus_torus} confirm a first-order convergence rate for both the $\text{CPM}_{\M}^{\N}$ and the $\text{LSM}_{\M}^{\N}$. However, note that the width of the computational band for the $\text{LSM}_{\M}^{\N}$ had to be three times that of the $\text{CPM}_{\M}^{\N}$ to obtain first-order convergence. If a smaller band width is used, the $\text{LSM}_{\M}^{\N}$ has poor convergence or does not converge at all. This large band width requirement is a contributing factor for the high computational work of the $\text{LSM}_{\M}^{\N}$.

The computation time of each example is also given in Tables~\ref{tab:ls_cp_sphere_sphere}-\ref{tab:ls_cp_torus_torus}. As expected, the $\text{CPM}_{\M}^{\N}$ is faster than the $\text{LSM}_{\M}^{\N}$ in all experiments. The $\text{CPM}_{\M}^{\N}$ speedup roughly ranges from a factor of 5 to 10. Recall that our  implementation of the $\text{LSM}_{\M}^{\N}$ uses the closest point extension as a replacement for the extension evolution~(\ref{reext}), which further improves the efficiency of the $\text{LSM}_{\M}^{\N}$ over the original implementation~\cite{Memoli2004}. For convenience, the rightmost columns of Tables~\ref{tab:ls_cp_sphere_sphere}-\ref{tab:ls_cp_torus_torus} give the $\text{CPM}_{\M}^{\N}$ speedup, (Comp. time using the $\text{LSM}_{\M}^{\N}$)/(Comp. time using the $\text{CPM}_{\M}^{\N}$).


\section{Numerical Results}
\label{sec:num_res}
There are numerous areas of application for harmonic maps and general manifold mappings. Some of these, such as direct cortical mapping~\cite{Shi2007,Shi2009}, are interested in the values of the map $\u.$ Other applications are primarily visual. In this section, we highlight the behaviour and performance of our method with three visual applications.  Section~\ref{sec:app_texture_map} denoises texture maps following an idea from M{\'e}moli et al.~\cite{Memoli2004}. Section~\ref{sec:rand_map} diffuses a random map between two general manifolds to a point. Finally, colour image enhancement via chromaticity diffusion~\cite{Tang2001} is performed in Section~\ref{sec:colour_im_denoise}. 

\subsection{Diffusion of noisy texture maps}
\label{sec:app_texture_map}
Our first numerical experiments diffuse noisy texture maps.  Since texture maps give a means to visualize the map $\u^k,$ they are helpful for providing intuition and insight into our algorithms.  

To begin, a texture map ${\bf T}$ is created using the ideas of Zigelman et al.~\cite{Zigelman2002}. The map ${\bf T}$ is inverted to yield a map ${\bf w}({\bf x}) : D \rightarrow \N$ from the planar image domain $D$ to the manifold $\N.$ A noisy map is created by adding a normally distributed random map ${\bf r}({\bf x} ): D \rightarrow \mathbb{R}^3$ to ${\bf w}.$ The sum of ${\bf r}$ and ${\bf w}$ is generally not on $\N$ so this summation is followed by a projection step onto $\N.$ This gives a noisy map $\u^0(\x) : D \rightarrow \N$ from the image domain to the manifold $\N$ defined by 
\begin{equation}
\u^0(\x) = \cp_{\N} ({\bf w}(\x) + {\bf r}(\x)).
\label{init_noisy_map}
\end{equation}

The gradient descent equations~(\ref{proj_graddes}) are expected to diffuse $\u^0$ analogous to $H^1$-regularization of a planar image. It is important to recognize that the initial map $\u^0$ is evolved, not the colour values of the image. Our purpose for placing an image on $\N$ is to visualize and compare the initial map $\u^0$ and the harmonic map computed by the $\text{CPM}_{\M}^{\N}$ (Algorithm~\ref{alg:cpmmm}). 

The $\text{CPM}_{\M}^{\N}$  is applied with $\u^0$ as the initial condition. Numerical implementation of the $\text{CPM}_{\M}^{\N}$ is nearly identical to the $\text{CPM}_{\M}$ for unconstrained PDEs on manifolds~\cite{Ruuth2008}. The sole difference is the need to  evaluate the closest points of $\u_{\text{ext}}(\x)$ on $\N$ via $\cp_{\N}(\u_{\text{ext}}(\x)).$  Our codes are all straightforward modifications of existing $\text{CPM}_{\M}$ software~\cite{cp_matrices}. 

In all examples, the heat equation in the $\text{CPM}_{\M}^{\N}$ is discretized by second-order centred differences in space and forward Euler in time. A time step-size of $\Delta t = 0.1 \Delta x^2$ is used. Textures on manifolds are visualized using the \texttt{patch} command in M\textsc{atlab}; a triangulation of the manifold is used in this final visualization step.

\subsubsection{Harmonic maps from a plane to $S^2$}
 In our first example, we compute the harmonic map from $\M \subset \mathbb{R}^2$ to $\N = S^2$ and conduct a numerical convergence study. Each time step of the $\text{CPM}_{\M}^{\N}$ performs heat flow in the Euclidean space $\M \subset \mathbb{R}^2,$ followed by a projection onto $\N$ using $\cp_{\N}.$ Since $\M \subset \mathbb{R}^2$ the $\text{CPM}_{\M}$ is not used for the first step of the $\text{CPM}_{\M}^{\N}.$ Instead, the heat equation is directly solved on the plane $\M,$ while imposing Neumann boundary conditions.
 
 The closest point function for $\N=S^2$ has the explicit formula $$\cp_{S^2}(\mathbf{z}) = \frac{\mathbf{z}}{\|\mathbf{z}\|_2}.$$ Therefore, in this example, one evolves the heat equation on $\Omega_c =[-1,1]^2 \subset\mathbb{R}^2$ over a time $\Delta t$ starting from $\tilde{\u}(\x,0) =\u^k(\x),$ to give $\tilde{\u}(\x, \Delta t).$ Then the solution at time $t^{k+1}$ is simply $$\u^{k+1}(\x) = \frac{\tilde{\u}(\x, \Delta t)}{\|\tilde{\u}(\x, \Delta t)\|_2}.$$ Note that there is no $\u_{\text{ext}}(\x)$ in this example since $\M$ is a plane.
 
 We apply a spatial discretization step-size of  $\Delta x = 0.005$ and evolve for 300 time steps. The random map, ${\bf r},$ is constructed using the method described in Section~\ref{subsec:ident} with $\alpha = 0.05.$  Figure~\ref{fig:res_sphere} shows the noisy map $\u^0(\x)$ (left column) and our computed harmonic map $\u^k(\x)$ (right column) at two viewing angles. 
\begin{figure}
\centering
\begin{tabular}{cc}
 \includegraphics[width=1.5in]{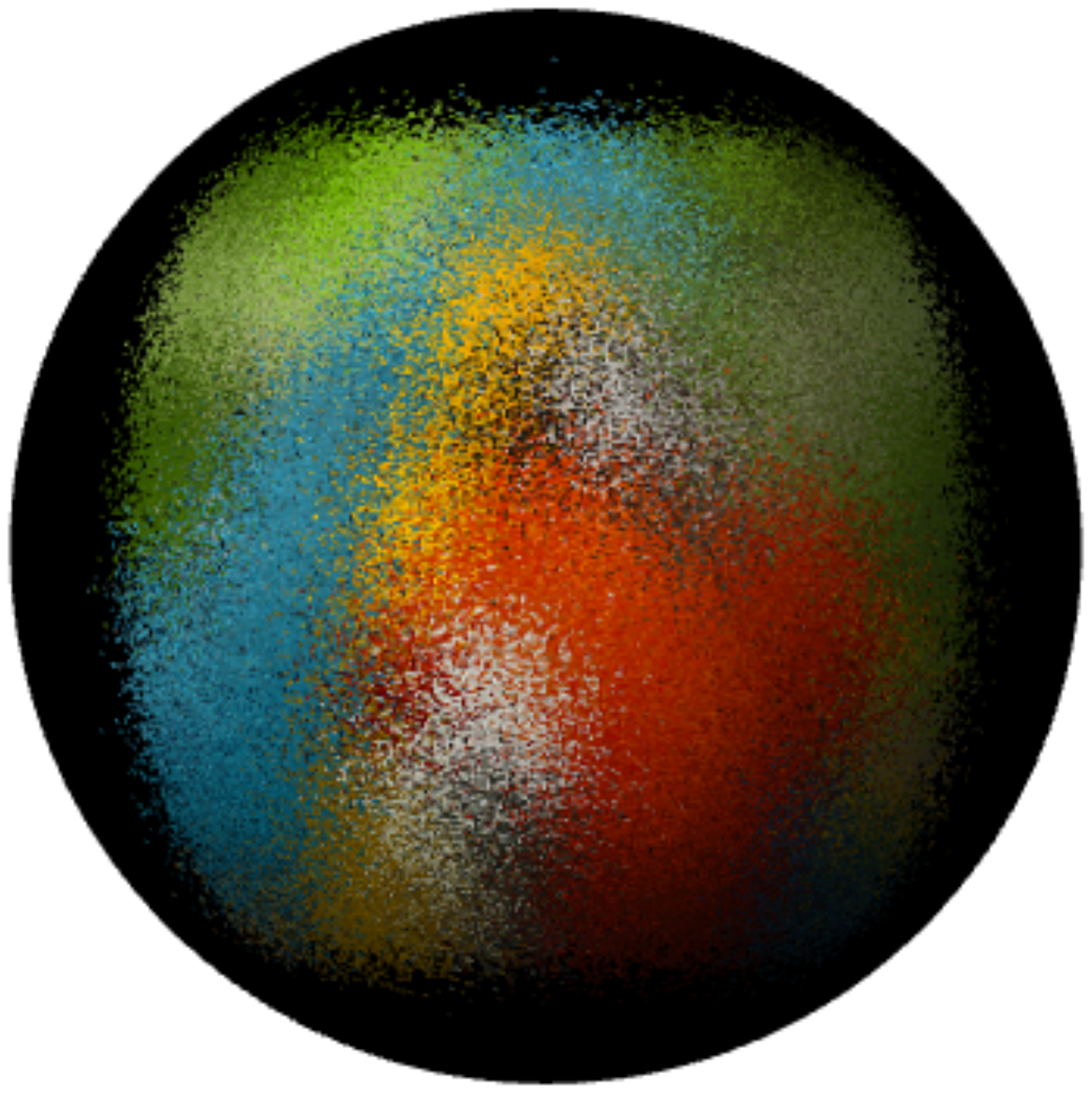}  & \includegraphics[width=1.5in]{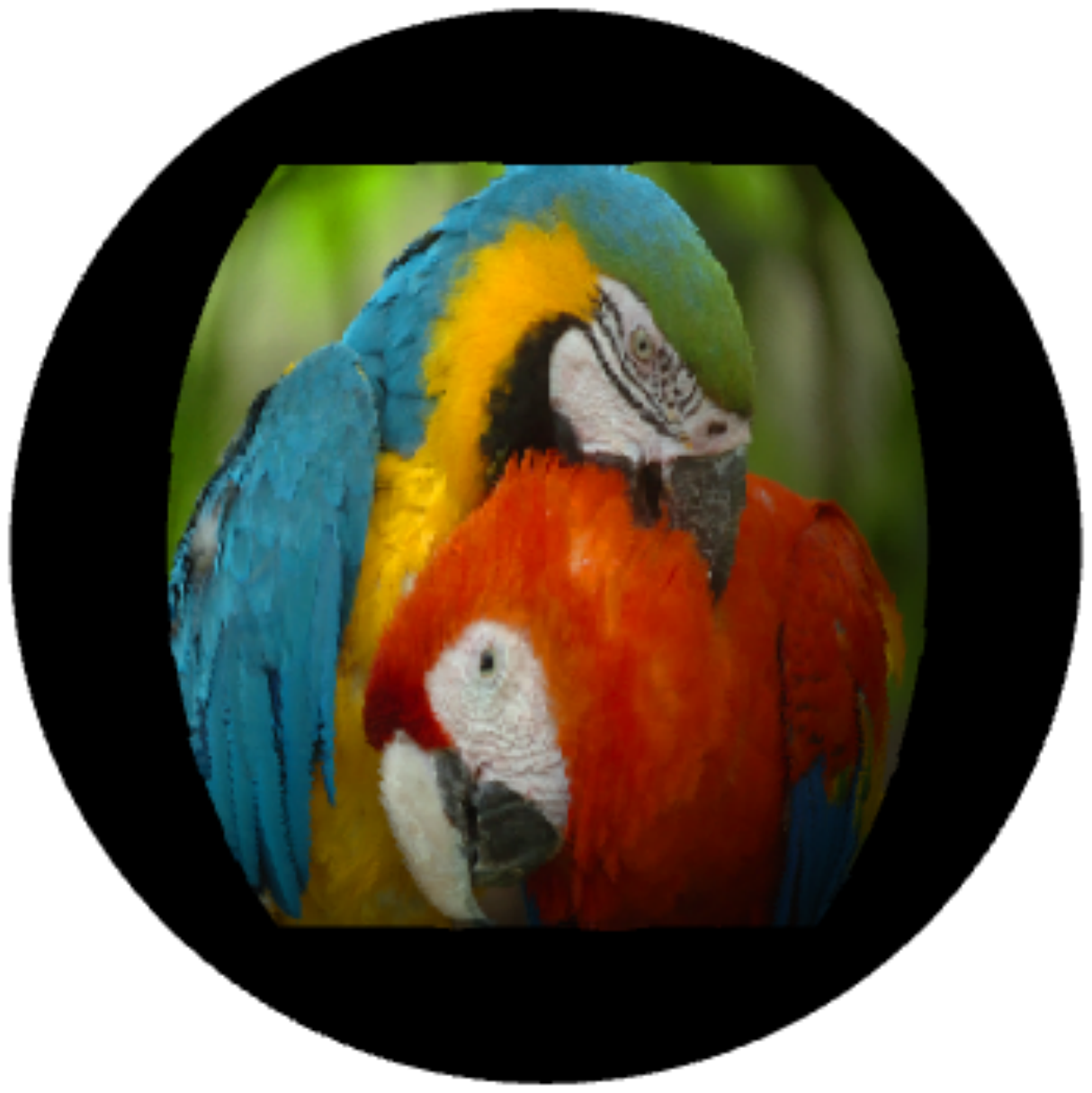}\\
 \includegraphics[width=1.5in]{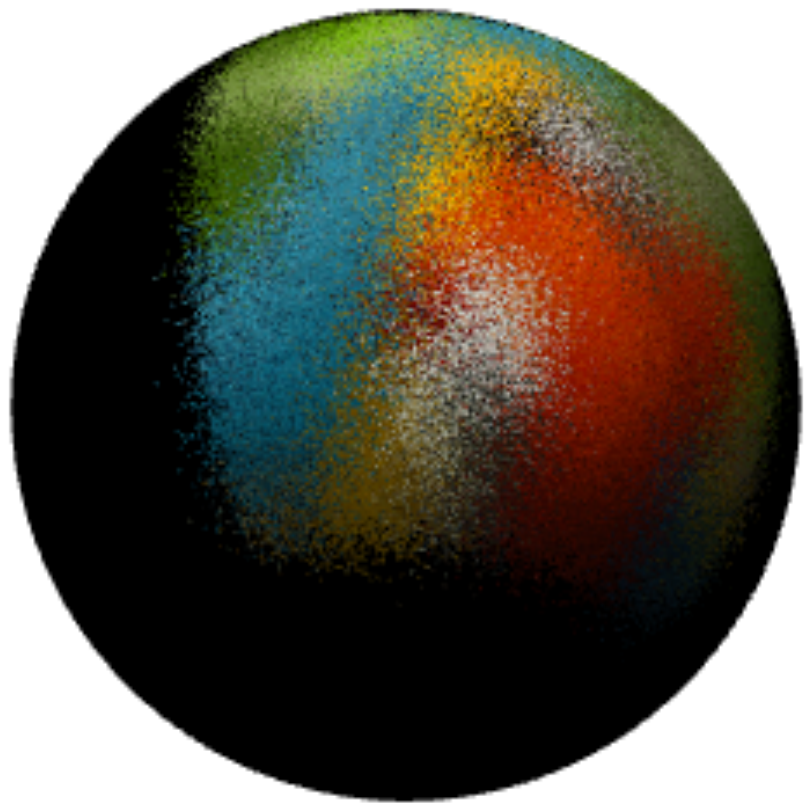}  & \includegraphics[width=1.5in]{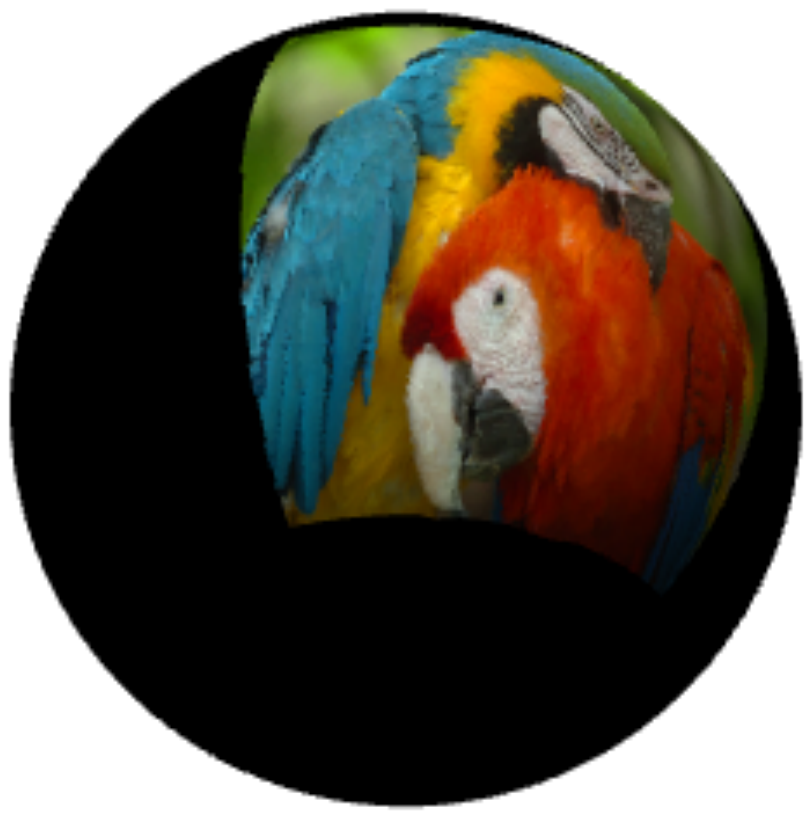}\\
\end{tabular}
\caption{A noisy map from $\M\subset \mathbb{R}^2$  onto the unit sphere (left column) is denoised via the computation of a harmonic map (right column).}
\label{fig:res_sphere}
\end{figure}
We see that the harmonic map from $\M \subset \mathbb{R}^2$ to $\N=S^2$ computed using the $\text{CPM}_{\M}^{\N}$ is a denoised version of the initial map $\u^0(\x).$ The planar image of parrots is courtesy of~\cite{parrots}.

We conclude this subsection with a convergence study for $\M = [-1,1]^2 \subset \mathbb{R}^2$ and $\N = S^2.$ There is no analytical solution for this example so we compare results against a reference solution, $\u_{\text{ref}},$ that was computed using $\Delta x = 0.0015625.$ The error in $\u^k$ (when compared to $\u_{\text{ref}}$) is computed using several $\Delta x$ values and at a final time $t_f= 0.01.$ The maximum Euclidean distance between $\u^k$ and $\u_{\text{ref}}$ over nodes $\mathbf{z}_j \in \M$ is used as the error estimate, i.e., $$ \text{(Error Est.)} = \hspace{0.01cm} \max_{\mathbf{z}_j} \left\|  \u^k(\mathbf{z}_j) -\u_{\text{ref}}(\mathbf{z}_j) \right\|_2 .$$ Averaging over 96 realizations to account for the random initial map, we observe first-order convergence. See Table~\ref{convTab} for the results.
\begin{table}
\centering
\begin{tabular}{p{6.75cm} p{6.25cm} c}
\hline
$\Delta x$ & Error Est.  & Conv. rate\\
\hline
$0.1$ & 5.41e$-$02 & \\
$0.05$ & 2.82e$-$02 & $0.93$ \\
$0.025$ & 1.47e$-$02 & $0.94$ \\
$0.0125$ & 7.25e$-$03 & $1.02$ \\
$0.00625$ & 3.45e$-$03 & $1.07$ \\
\hline
\end{tabular}
\caption{Convergence study of errors between a reference solution $\u_{\text{ref}}$ and a harmonic map $\u^k$ from a plane to $S^2.$ }
\label{convTab}
\end{table}

\subsubsection{Harmonic maps from a plane to Laurent's hand}
\label{sec:hand}
In our second example, a harmonic map from a source manifold $\M \subset \mathbb{R}^2$ to a hand target manifold is constructed. The hand target manifold $\N$ is ``Laurent's hand,'' a triangulated manifold available in the AIM@SHAPE repository~\cite{AimShape}. Geodesic distances needed for the texture mapping algorithm of Zigelman et al.~\cite{Zigelman2002} are computed using the method of Crane et al.~\cite{Crane2013}.

In the second step of the $\text{CPM}_{\M}^{\N},$ the closest point to $\N$ is evaluated for all the irregularly spaced target values $\tilde{\u}(\x, \Delta t).$  In our implementation, we evaluate the closest point to the triangulation for each $\tilde{\u}(\x, \Delta t)$ by a local search over the triangulation. That is, we pre-compute the closest point function $\cp_{\N}$ on a uniform grid surrounding the surface (using, e.g., \texttt{tri2cp.m}~\cite{cp_matrices}) and use it to localize the closest point search of $\tilde{\u}(\x, \Delta t)$ to $\N.$ Each $\tilde{\u}(\x, \Delta t)$ belongs to a cube defined by 8 vertices. The closest point values of these vertices yield a set $\mathcal{S}$ of up to 8 triangles. We search for the closest point of $\tilde{\u}(\x,\Delta t)$ over all triangles of $\N$ that are within a small bounding sphere for $\mathcal{S}.$

Using the $\text{CPM}_{\M}^{\N}$ with a spatial discretization step-size $\Delta x=0.005$ and 30 time steps, we compute a harmonic map starting from the initial, noisy map $\u^0.$ The random map, ${\bf r},$ is constructed using the method described in Section~\ref{subsec:ident}. In this example, however, a different scaling parameter is used in the $z$ coordinate direction than in the $x$ and $y$ directions. Specifically, values of $\alpha = 0.0025,$ $0.0025,$ and $0.001$ are taken for the $x,$ $y,$ and $z$ directions, respectively.  See Figure~\ref{fig:res_hand} for two viewing angles of the initial map $\u^0$ (left column) and the $\text{CPM}_{\M}^{\N}$ computed harmonic map $\u^k$ (right column).
\begin{figure}
\centering
\begin{tabular}{cc}
 \includegraphics[width=1.2in]{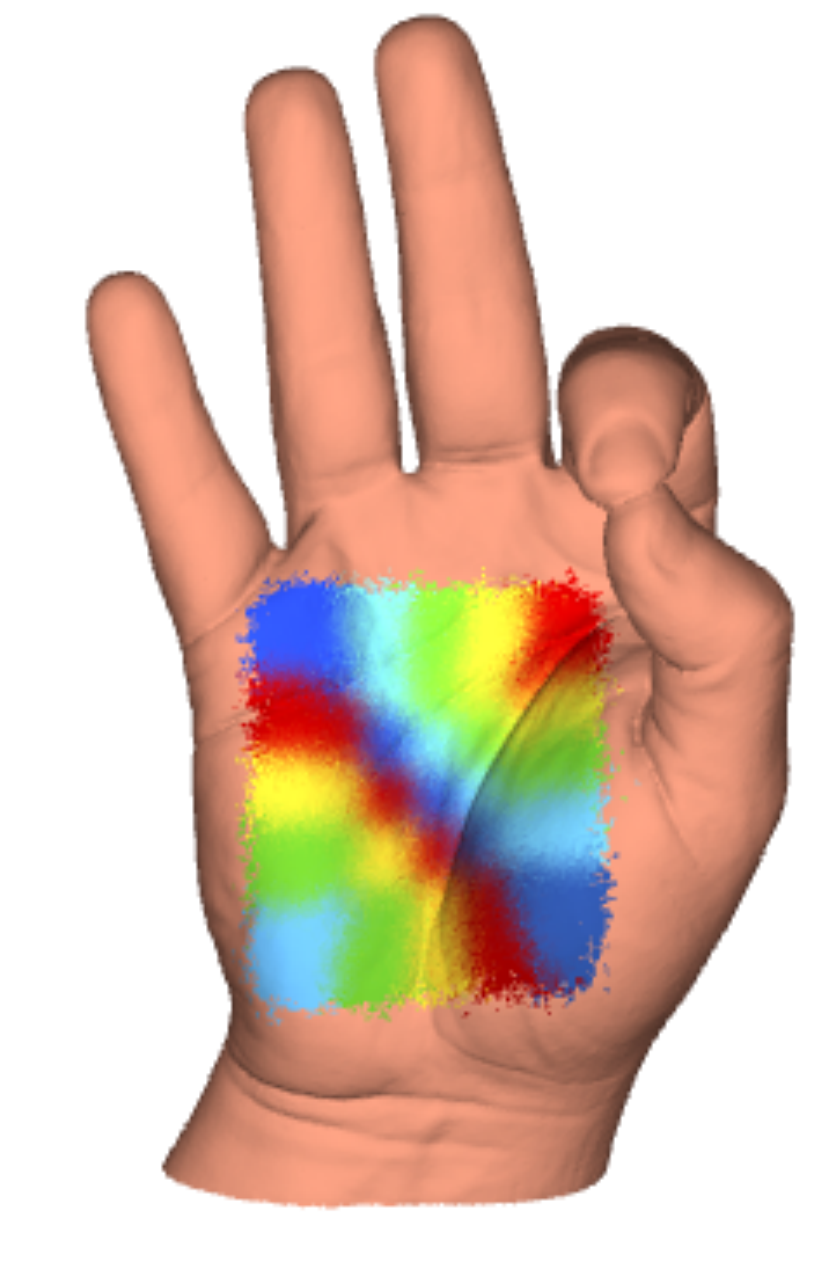}  & \includegraphics[width=1.2in]{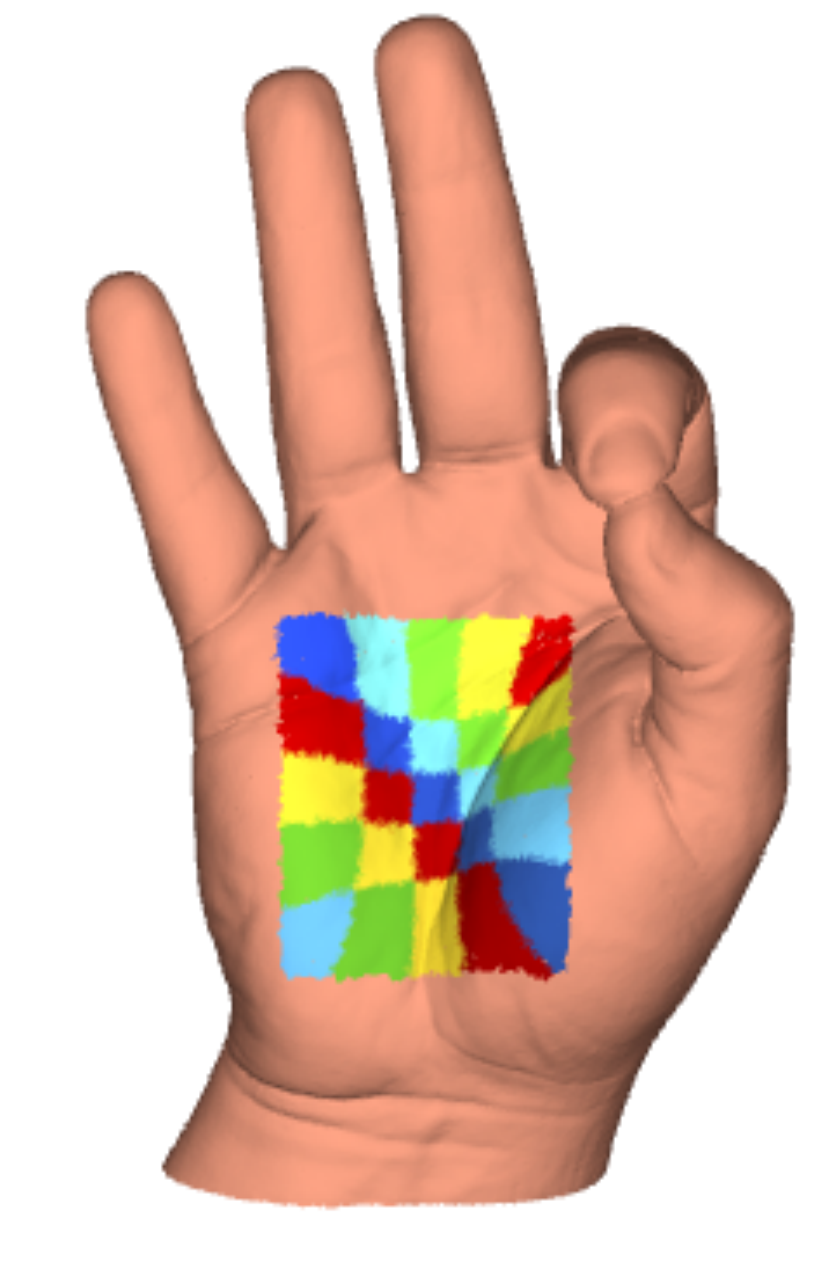} \\
    \includegraphics[width=1.7in]{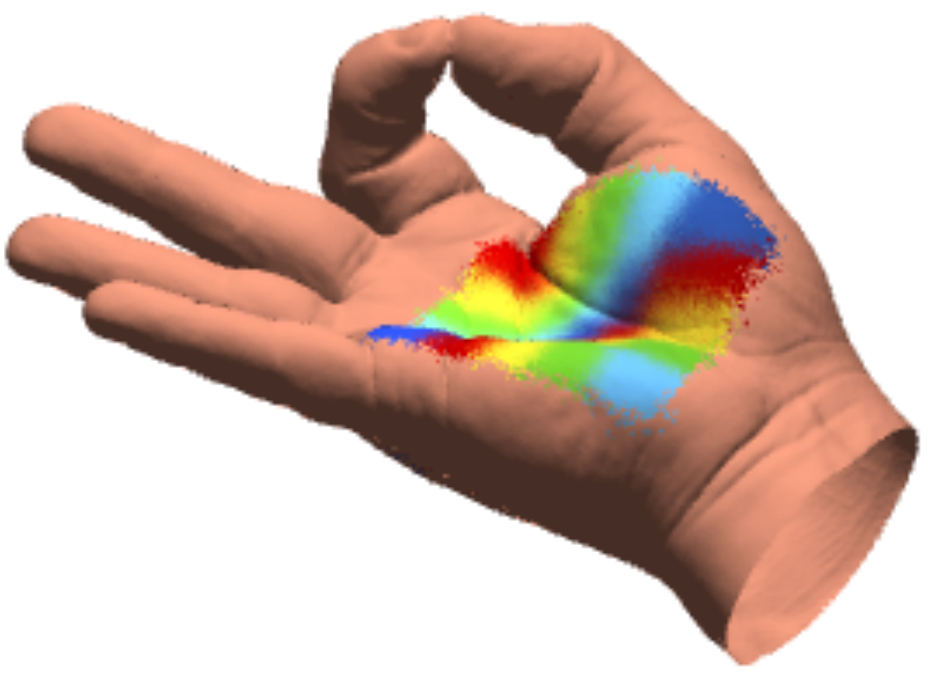}  & \includegraphics[width=1.7in]{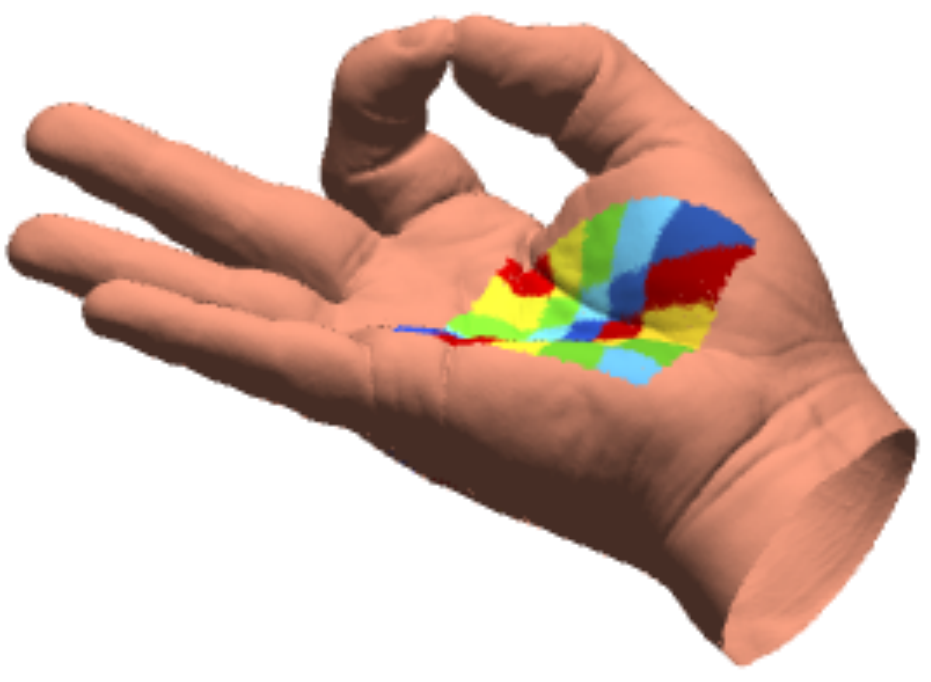} \\
\end{tabular}
\caption{A noisy map from $\M\subset \mathbb{R}^2$ onto a hand target manifold (left column) is denoised via the computation of a harmonic map (right column).}
\label{fig:res_hand}
\end{figure}
As in our previous example, the computed harmonic map is much less noisy than the initial map $\u^0.$ Note that our target $\N$ is a complex, open triangulated manifold. Open and closed manifolds are handled identically using the $\text{CPM}_{\M}^{\N}.$ The $\text{LSM}_{\M}^{\N}$ implementation would be more challenging since level set functions are only natural for closed manifolds.

\subsubsection{Harmonic maps from a cylinder to a submanifold of $S^2$}
An example of computing harmonic maps between two different curved manifolds is now considered. Specifically, we compute from a cylinder $\M$ to a portion of the unit sphere $\N.$  We take as our source a unit radius cylinder with $z \in [-2,2]$ and no top or bottom.

An image is placed on the surface of the cylinder by a simple change of variables; intuitively, the planar image is rolled into the cylinder. To accomplish this, first scale the rectangular image so that  $x \in [-\pi, \pi]$ and $y \in [-2,2].$ Next, set the angle $\theta$ and height $z$ in cylindrical coordinates equal to the $x$ and $y$ coordinates of the image, respectively. The point $\x$ is generally not a pixel location on the cylinder, so linear interpolation is used to obtain the colour values at $\x.$ 

The construction of the initial map for this example is as follows. A map $\mathbf{w}$ is defined from the cylinder to the corresponding portion of the sphere using the closest point function $\mathbf{w}(\x) = \cp_{S^2}(\x).$  The colour at $\x$ is assigned to the corresponding point $\mathbf{w}(\x)$ to place an image on the sphere.  As in~(\ref{init_noisy_map}), the initial, noisy map is formed by adding noise and projecting onto the target manifold, $$\u^0(\x) = \cp_{S^2}( \mathbf{w}(\x) + \mathbf{r}(\x)).$$ 

Note that since the cylinder is restricted to $z\in [-2,2],$ the northern and southern portions of the sphere do not appear in our map. As a consequence, our target manifold $\N$ is chosen to be an open manifold in our implementation of the $\text{CPM}_{\M}^{\N}.$ 

Consider now removing noise in the map by computing the harmonic map from the cylinder to the restricted sphere using the $\text{CPM}_{\M}^{\N}.$ A band around the surface of the cylinder serves as the computational domain $\Omega_c.$ A spatial discretization step-size of $\Delta x = 0.00625$ and 300 time steps are used. The random map, ${\bf r},$ is constructed using the method described in Section~\ref{subsec:ident} with $\alpha = 0.075.$ Homogeneous Neumann boundary conditions are automatically applied by the $\text{CPM}_{\M}^{\N}$ on the boundaries of the cylinder (see Section~\ref{subsec:cpmmm}). We display the initial, noisy map (left column) and the computed harmonic map (right column) in Figure~\ref{fig:cyl_sph} at two viewing angles.  
\begin{figure}
\centering
\begin{tabular}{cc}
    \includegraphics[width=1.6in]{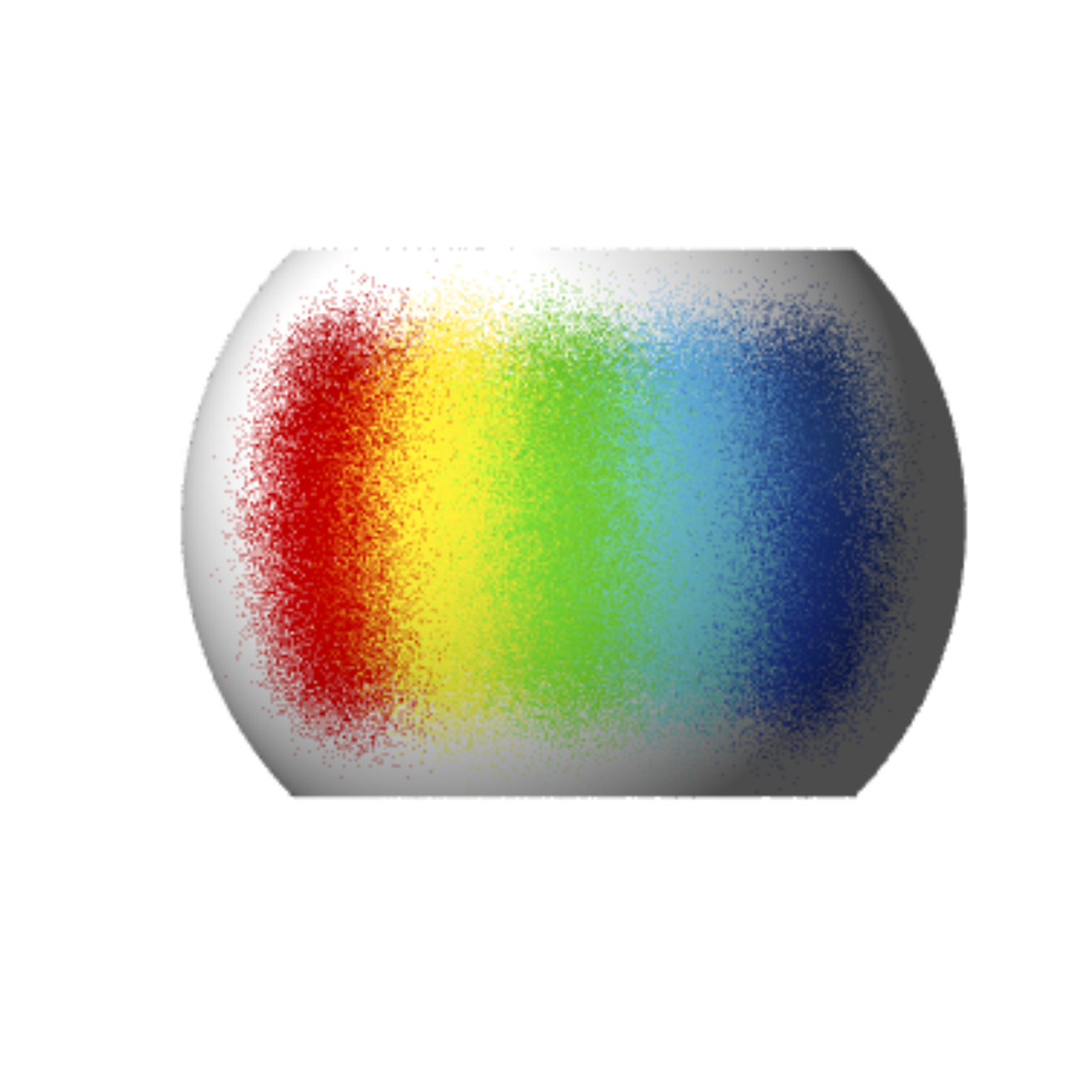}  & \includegraphics[width=1.6in]{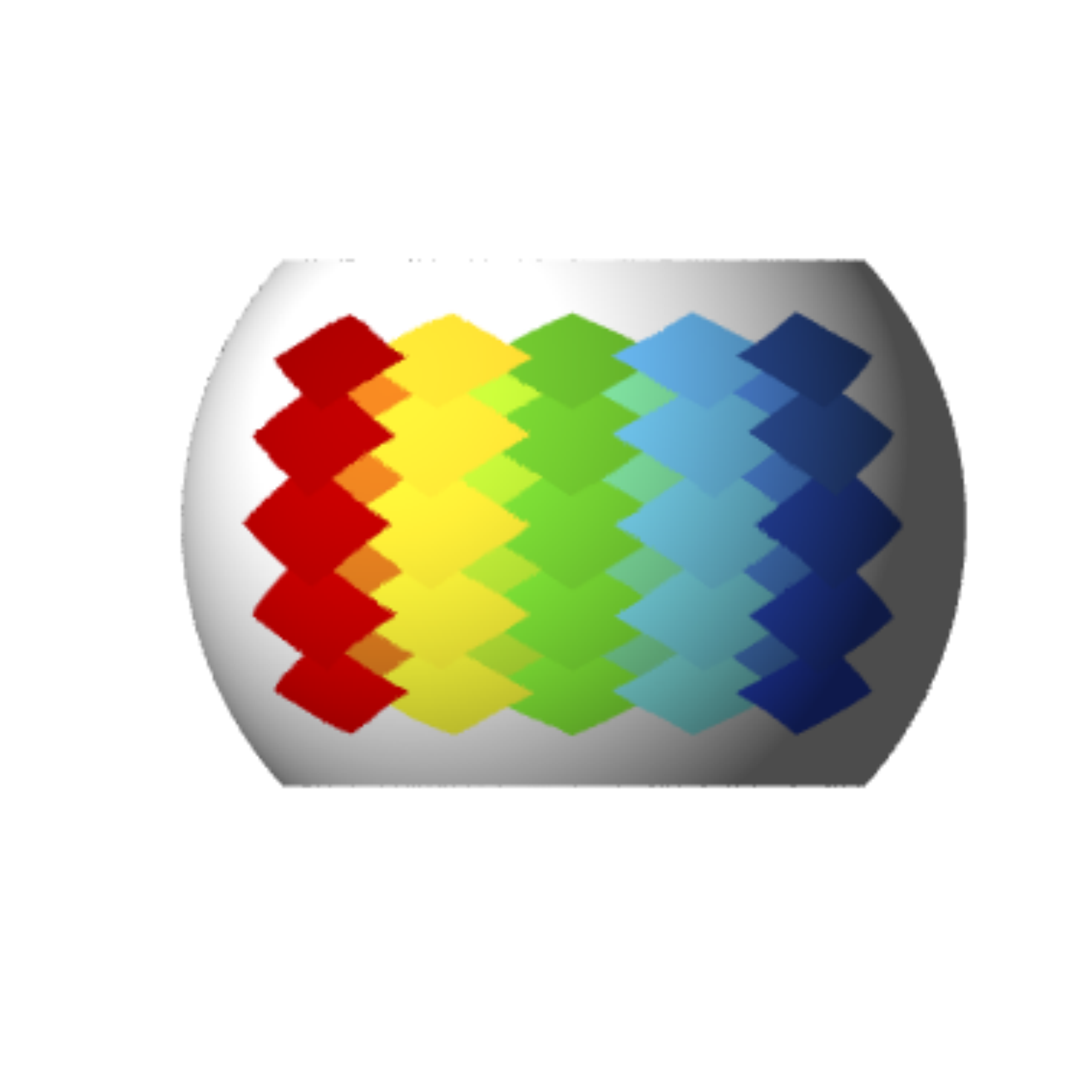} \\
    \includegraphics[width=1.6in]{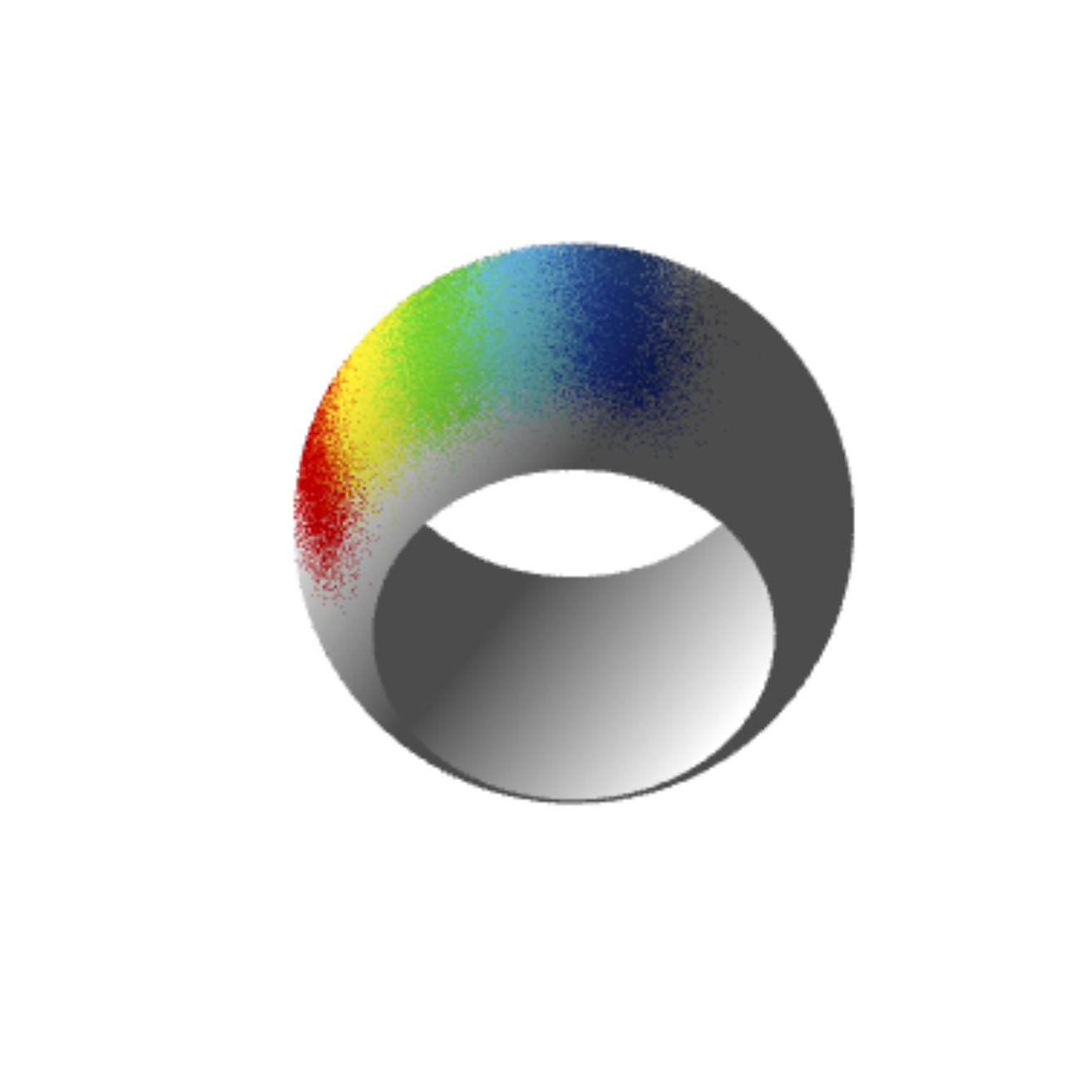}  & \includegraphics[width=1.6in]{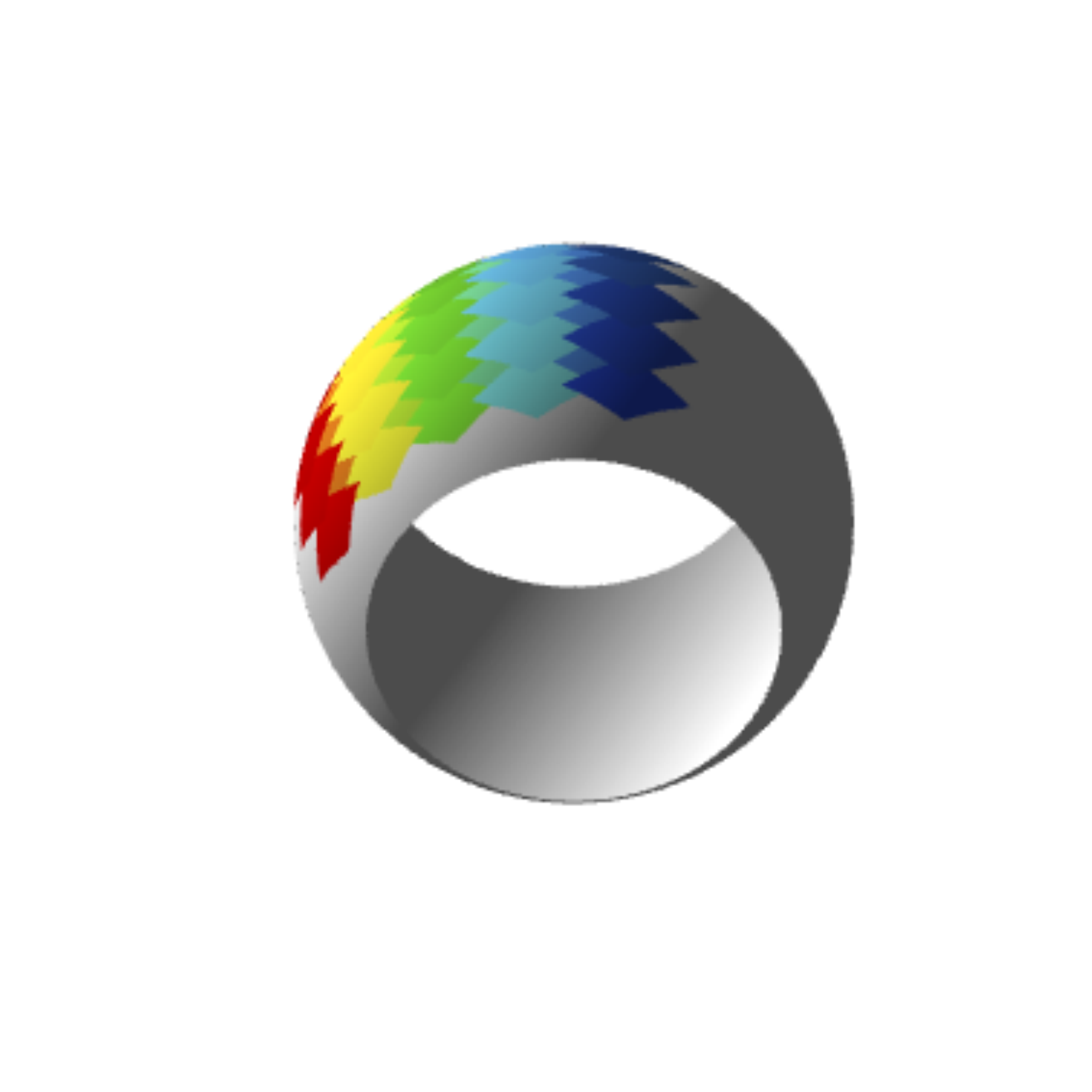} \\
\end{tabular}
\caption{Denoising of a noisy map (left column) from the unit radius cylinder ($z \in [-2,2]$) to a portion of the unit sphere. The denoised, harmonic map is visualized in the right column.}
\label{fig:cyl_sph}
\end{figure}

This example highlights the use of the $\text{CPM}_{\M}^{\N}$  for computing maps between two open, curved manifolds $\M$ and $\N.$ Frequently, methods for mapping between two general curved manifolds involve intermediate projections to a plane or sphere~\cite{Shi2007, Gibson2010}. The $\text{CPM}_{\M}^{\N}$ does not need intermediate projections, thereby eliminating a source of computational work and distortion errors. 

\subsection{Random maps from a torus to the Stanford bunny}
\label{sec:rand_map}
We now compute a harmonic map from a torus to the Stanford bunny starting from a random map. This further illustrates the computation of a harmonic map between general manifolds without resorting to intermediate projection steps. The source manifold $\M$ is a torus with minor radius 0.75 and major radius 1.25. The target manifold $\N$ is the Stanford bunny triangulation~\cite{bunny}. The Stanford bunny is an open manifold, like Laurent's hand, but has five holes instead of one. 

An initial random map from the torus to the Stanford bunny is constructed as follows. First, we choose 16 vertices $\mathbf{p}_i$ on the bunny triangulation. Then, the 240 nearest neighbours of each $\mathbf{p}_i$ are used to form 16 sets of points $\mathcal{P}_i.$ Points $\x \in \M$ are mapped to points in $\mathcal{P} = \cup \mathcal{P}_i$ by sampling uniformly with replacement using M\textsc{atlab}'s \texttt{datasample} command. The random map is evolved using the $\text{CPM}_{\M}^{\N}.$ We anticipate the evolution~(\ref{proj_graddes}) to take the initial random map to a point; see~\cite{Memoli2004} for details. 

Figure~\ref{fig:rand_map} (upper left) shows in blue where points $\x\in\M$ map onto the bunny $\N.$   
\begin{figure}
\centering
\begin{tabular}{ccc}
    \includegraphics[width=1.5in]{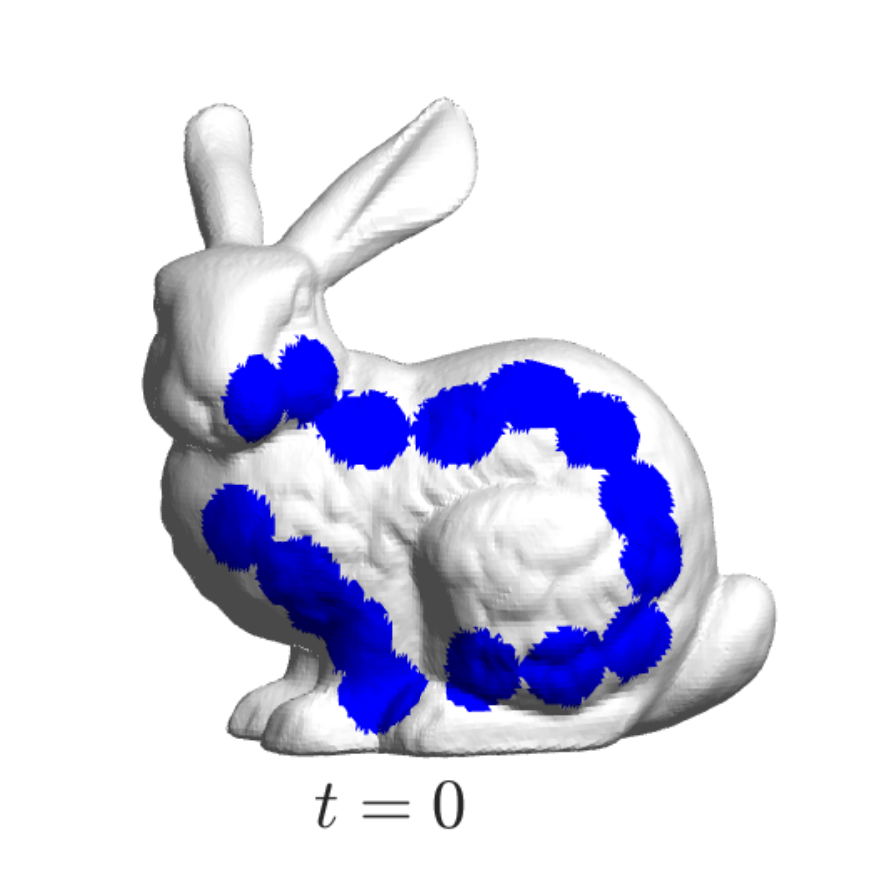}  & \includegraphics[width=1.5in]{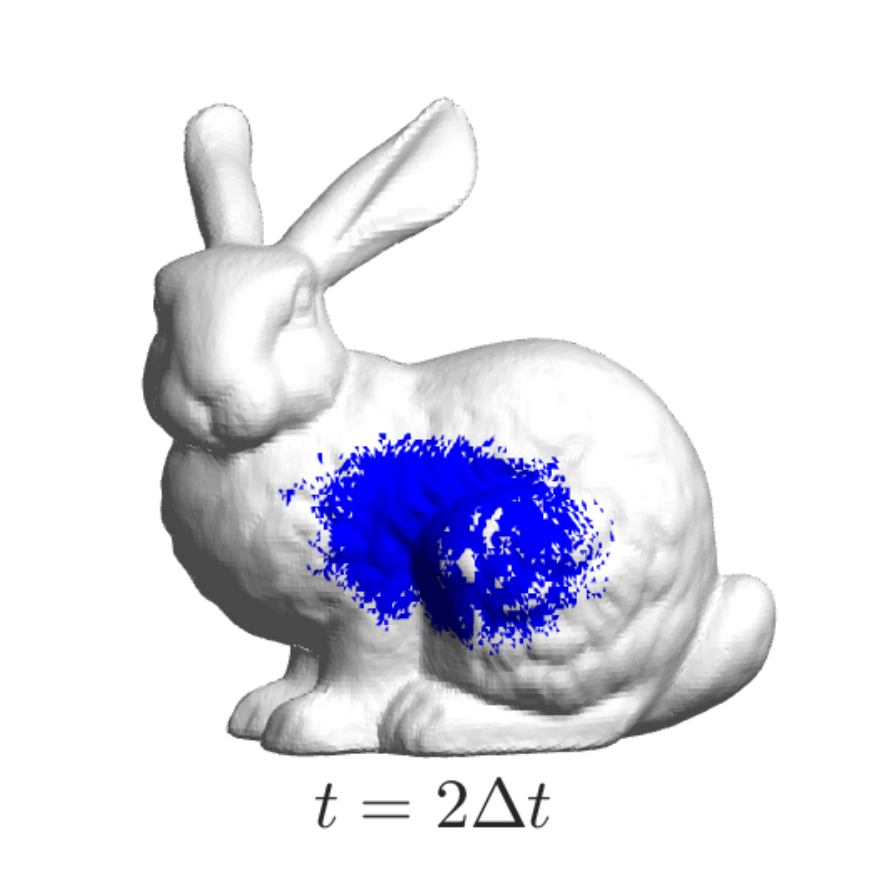} &
    \includegraphics[width=1.5in]{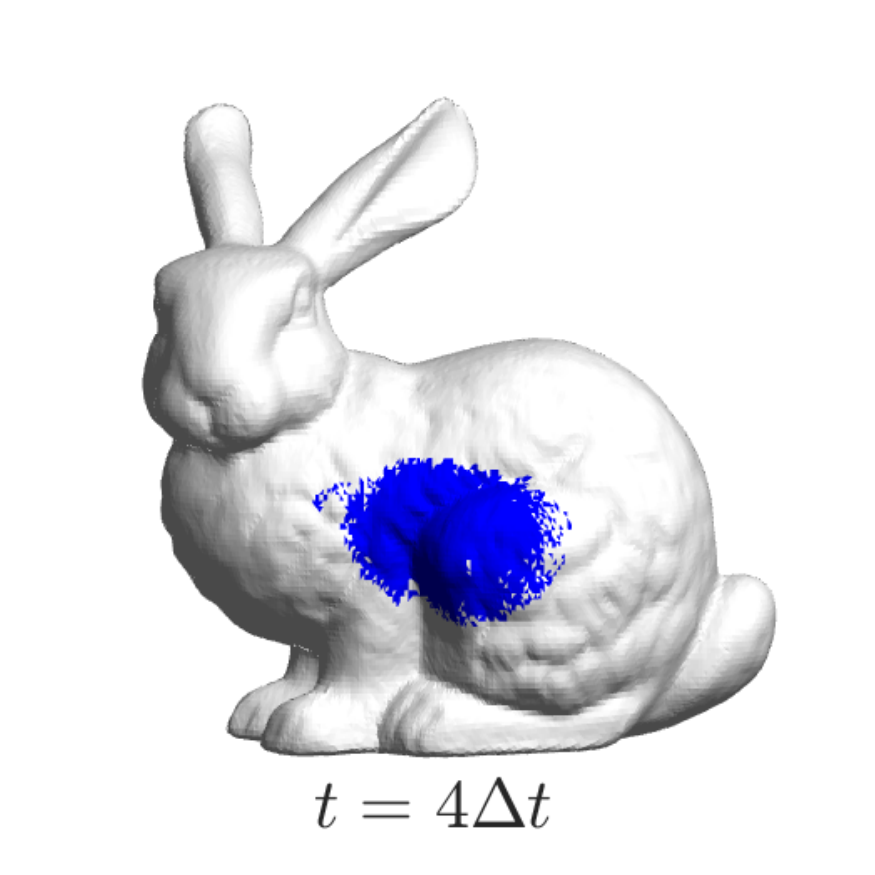}\\
   \includegraphics[width=1.5in]{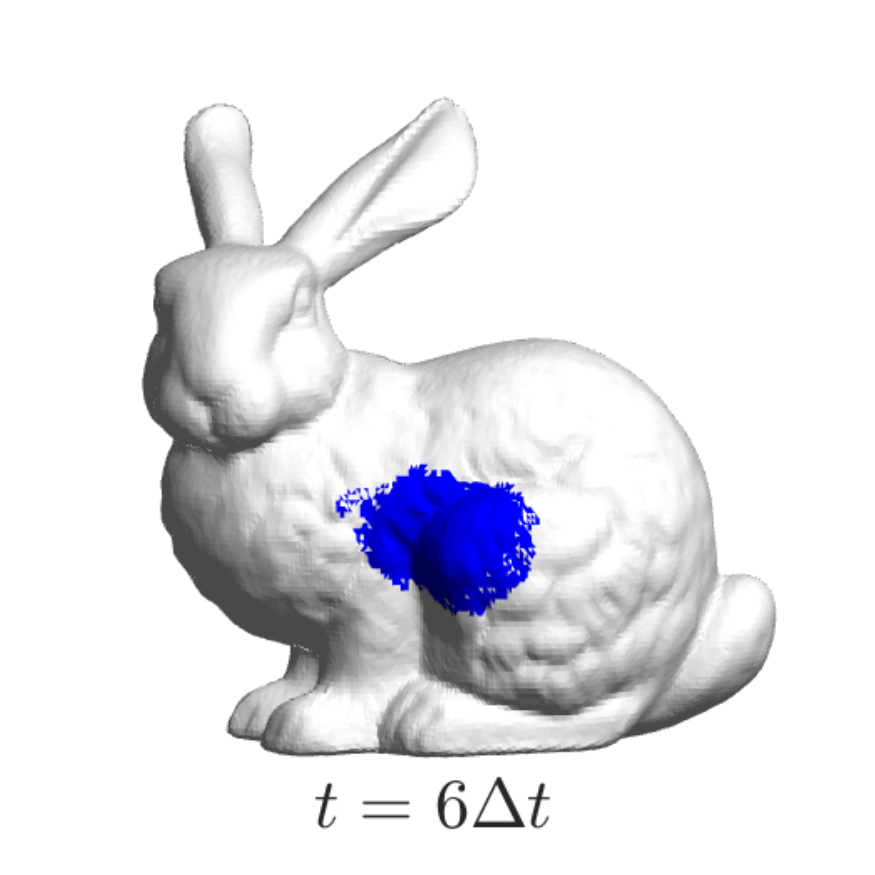}  & \includegraphics[width=1.5in]{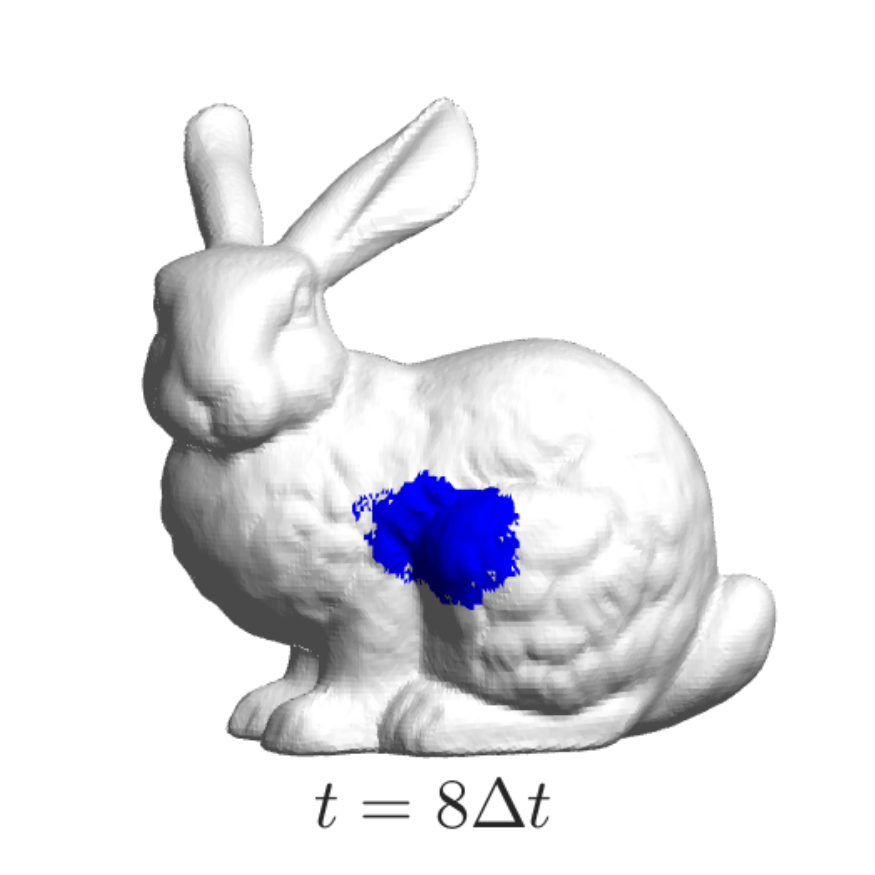} &
  \includegraphics[width=1.5in]{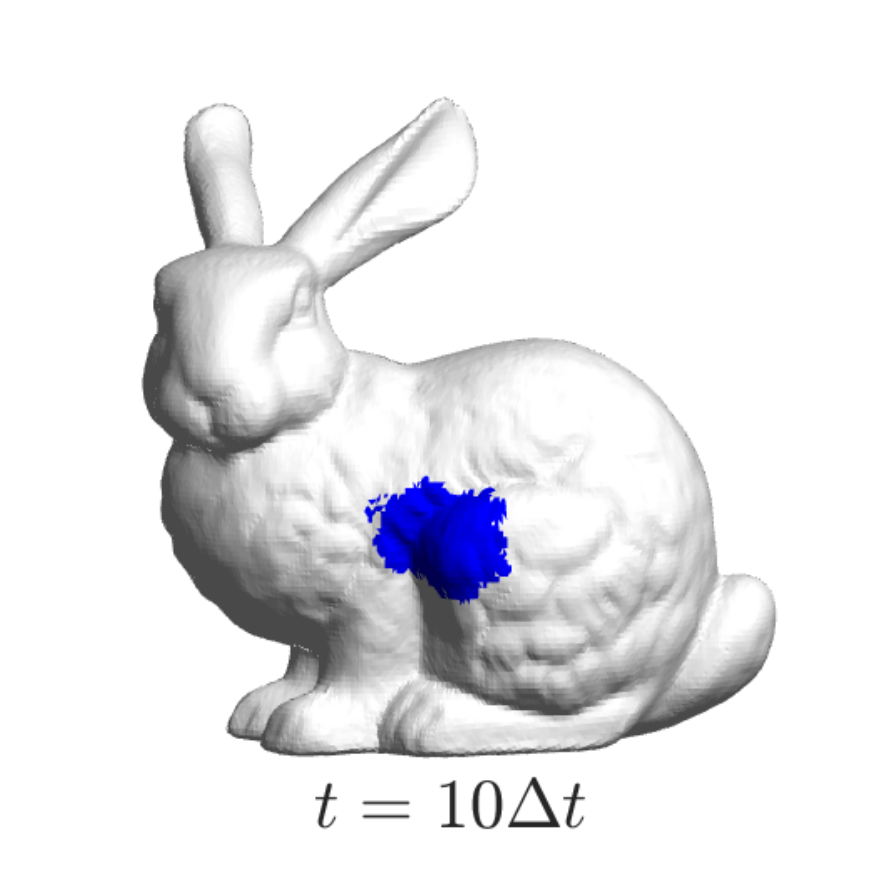}\\
      \includegraphics[width=1.5in]{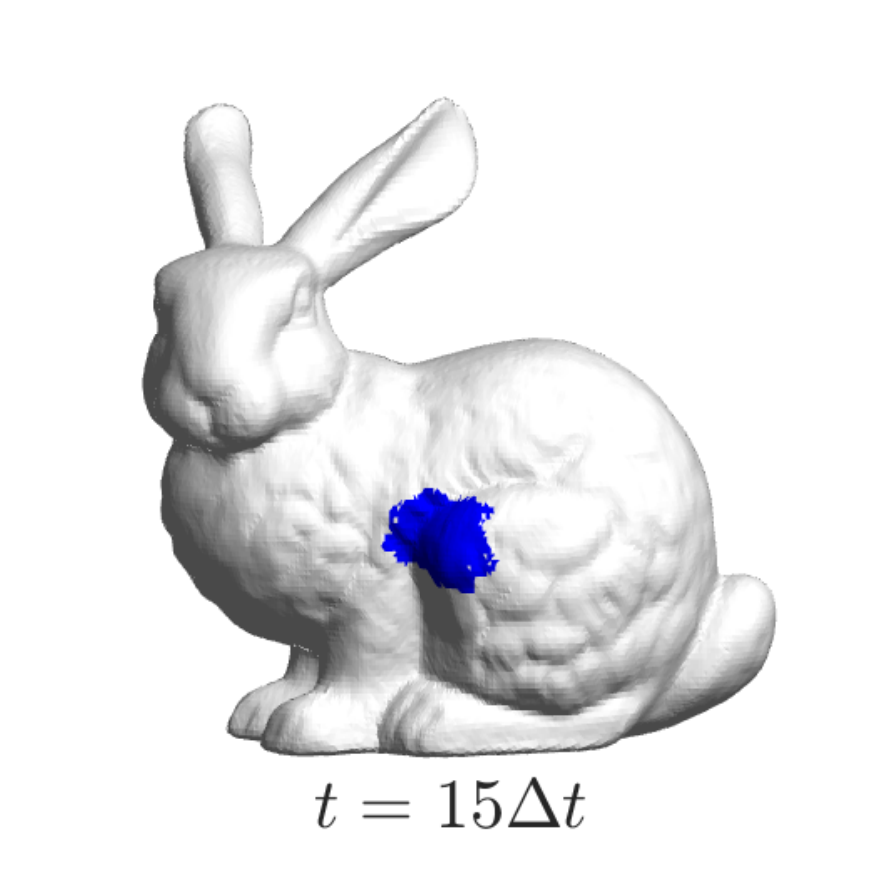}  & \includegraphics[width=1.5in]{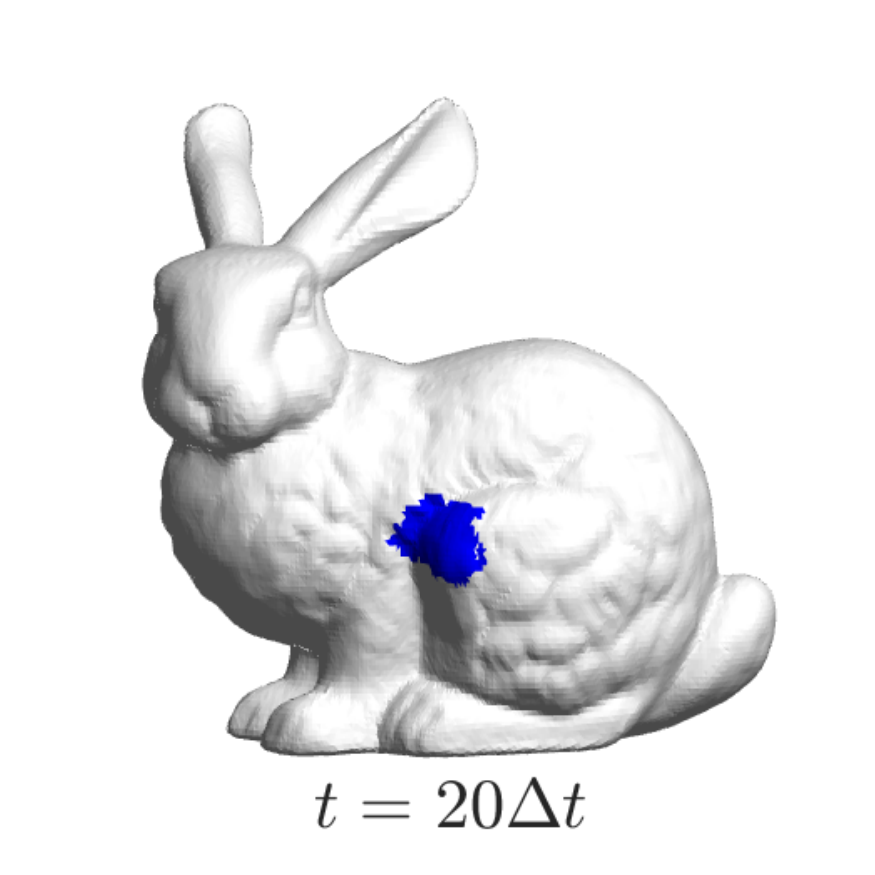} &
    \includegraphics[width=1.5in]{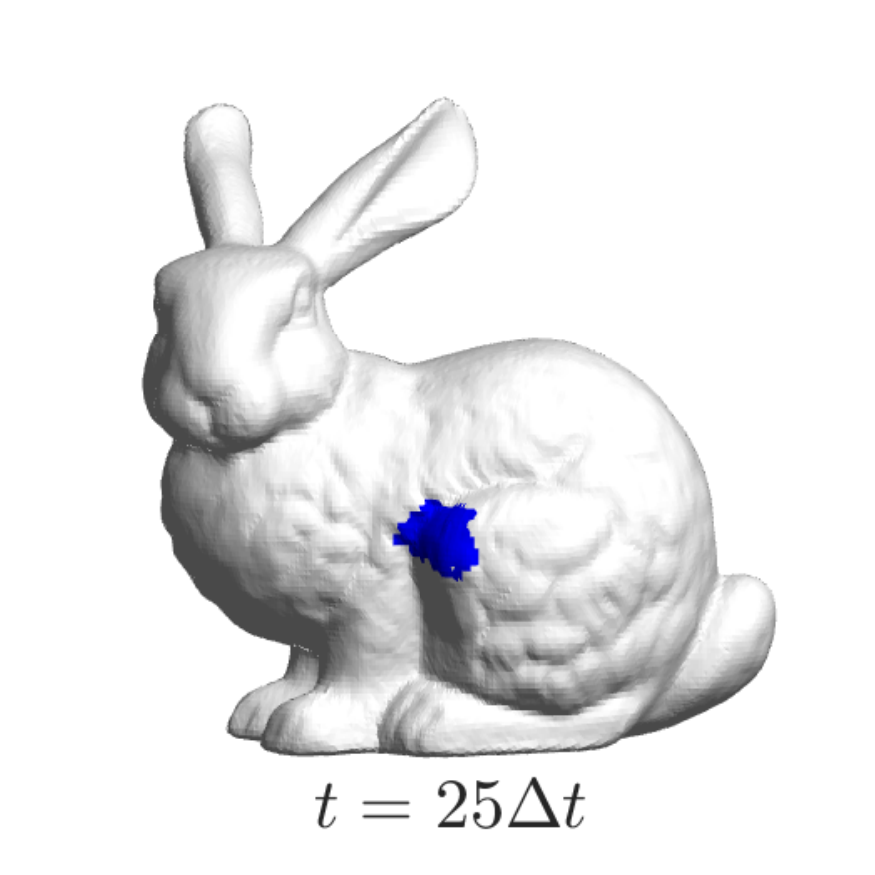}\\
    \includegraphics[width=1.5in]{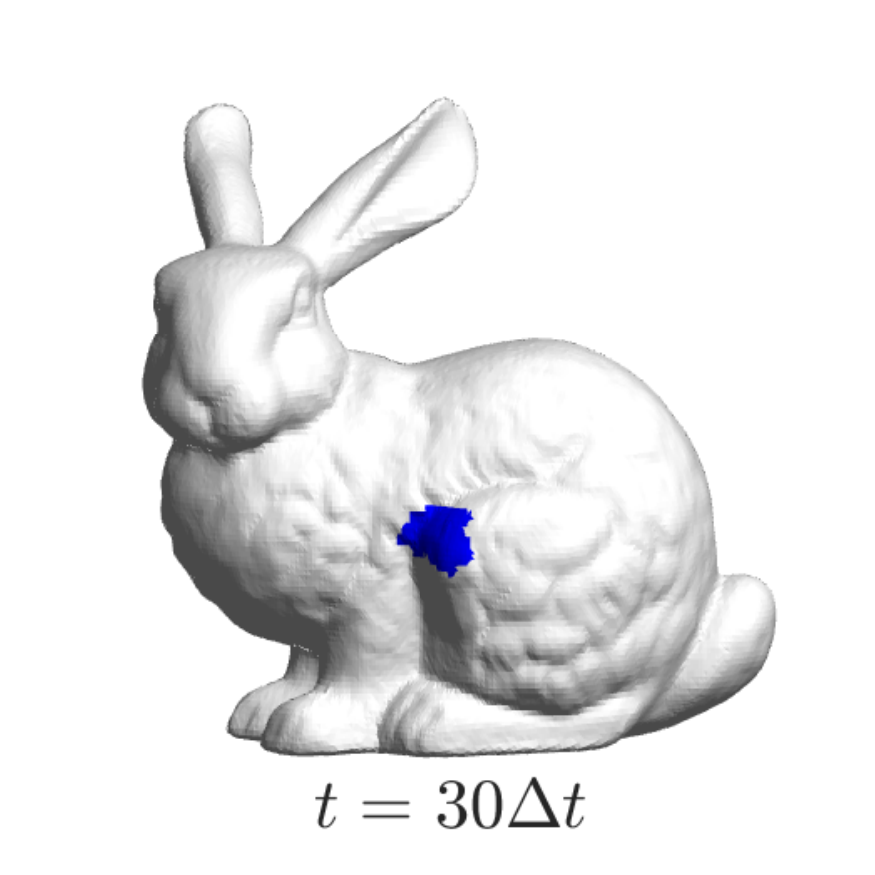}  & \includegraphics[width=1.5in]{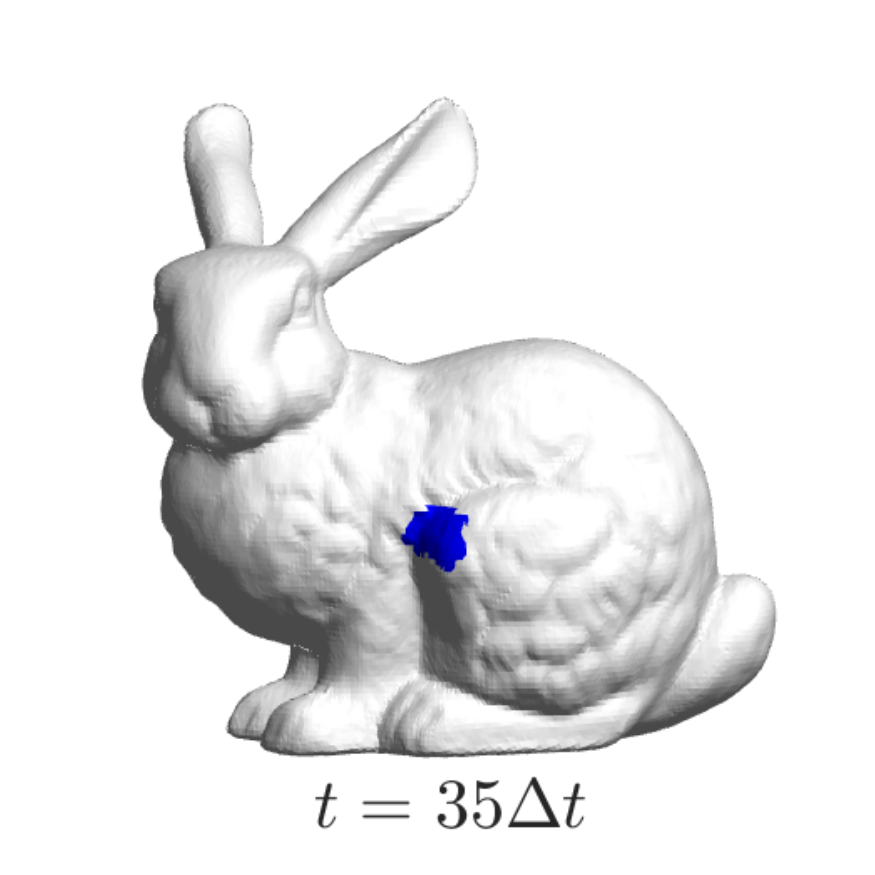} &
  \includegraphics[width=1.5in]{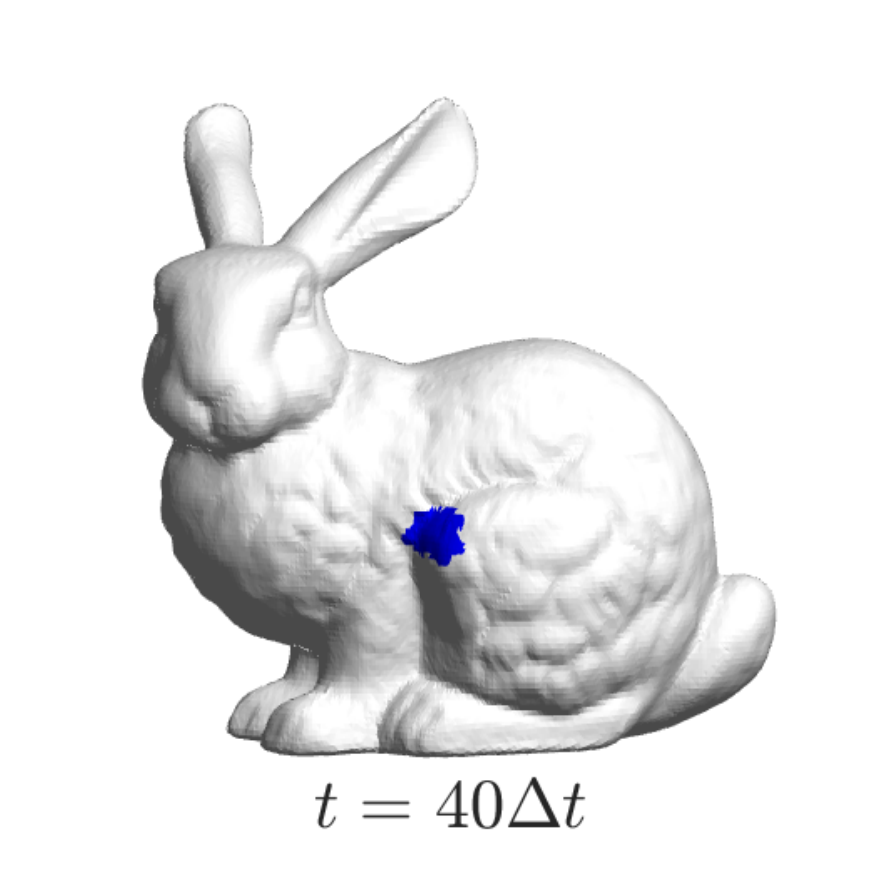}\\
      \includegraphics[width=1.5in]{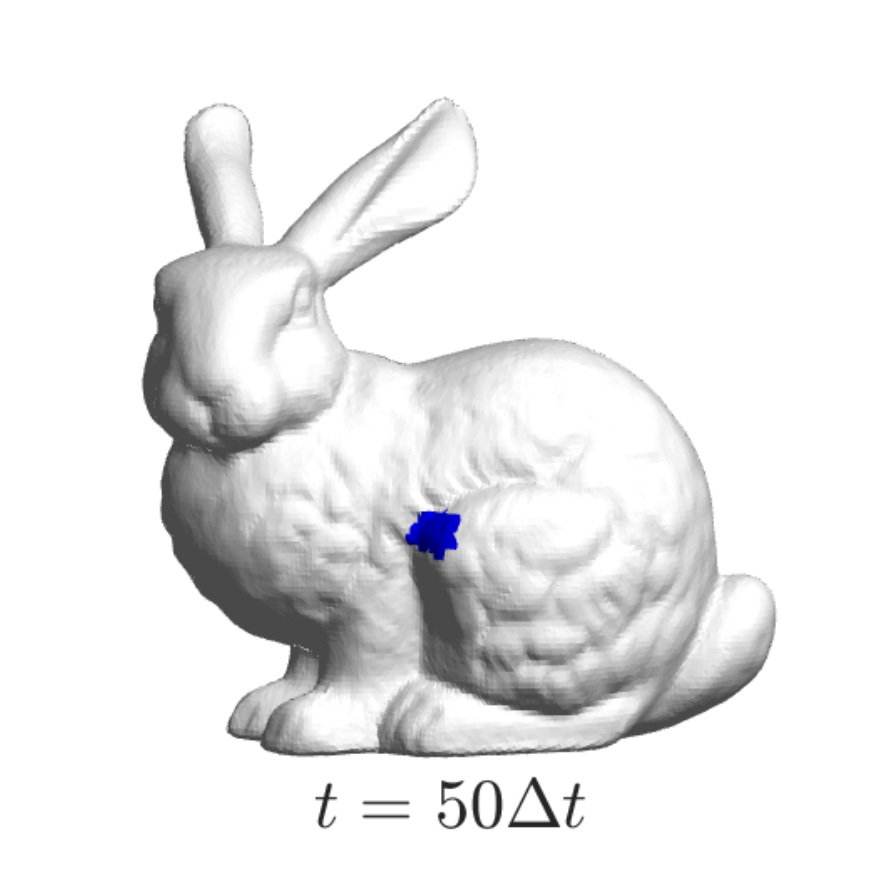}  & \includegraphics[width=1.5in]{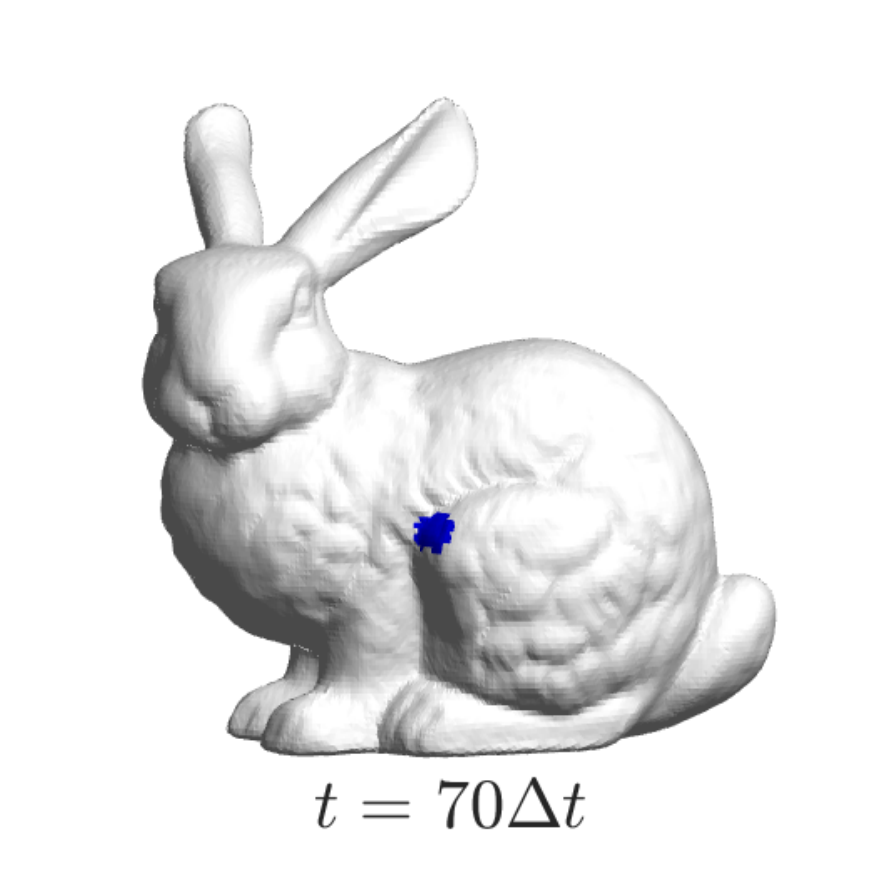} &
    \includegraphics[width=1.5in]{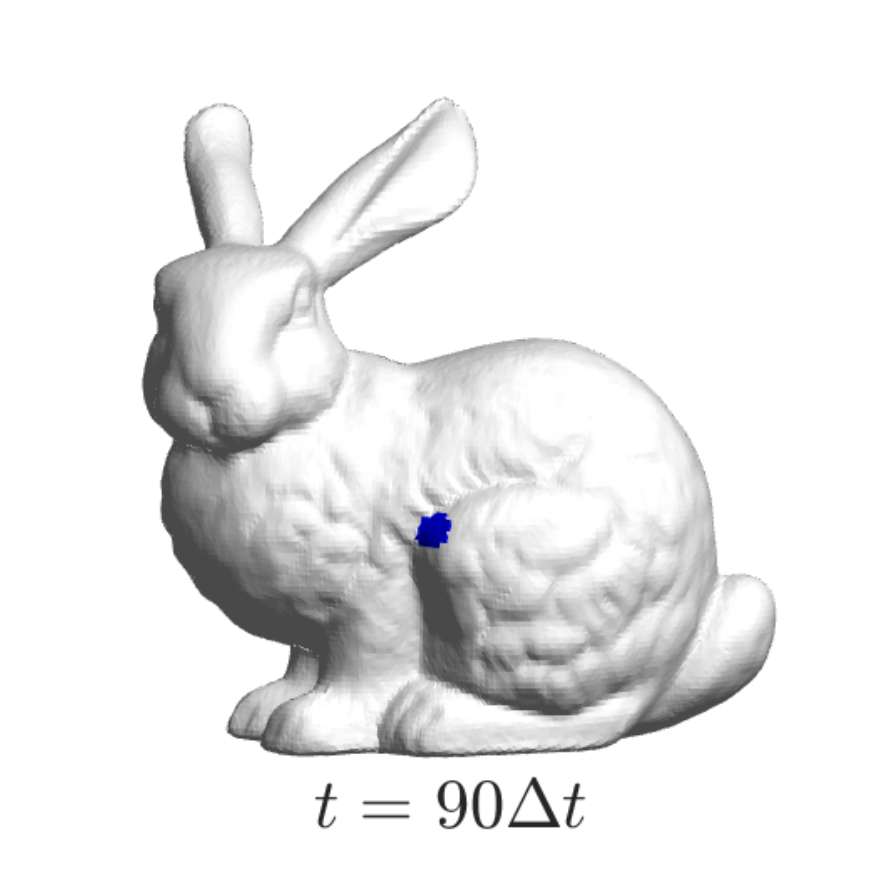}\\
\end{tabular}
\caption{The evolution of a random map from a torus to the Stanford bunny. The initial random map is shown in the top left corner in blue on the bunny. The map converges to a point.}
\label{fig:rand_map}
\end{figure}
To compute the harmonic map, we use second-order centred differences in space with a spatial step-size $\Delta x = 0.05$ in a banded computational domain $\Omega_c$ around the torus. To advance in time, forward Euler time-stepping with $\Delta t = 0.1 \Delta x^2$ is used.  The closest point function for the Stanford bunny is evaluated in the same way as our previous triangulated manifold example (see Section~\ref{sec:hand}). 

The $\text{CPM}_{\M}^{\N}$ evolution displayed in Figure~\ref{fig:rand_map} shows that the initial random map converges to a point (as expected). See Section 5.2 of~\cite{Memoli2004} for the corresponding $\text{LSM}_{\M}^{\N}$ calculation.

\subsection{Enhancing colour images via chromaticity diffusion}
\label{sec:colour_im_denoise}
We now consider colour image enhancement, a topic that can lead to harmonic maps and $p$-harmonic maps. One approach to remove noise from a colour image is to denoise the RGB-intensity values $I = [I_R,I_G,I_B]^T.$ However, colour artifacts are frequently observed with this approach. These artifacts are attributed to not preserving the direction of $I,$ which is called the chromaticity. For this reason, it is often preferred to denoise the intensity $I$ and the chromaticity $$\u =\frac{ I}{\|I\|_2},$$ separately~\cite{Tang2001,Vese2002}. 

The chromaticity is a map, $\u(\x): \M \rightarrow S^2,$ from a plane $\M\subset \mathbb{R}^2$ to the unit sphere $S^2,$ which can be denoised using the $\text{CPM}_{\M}^{\N}$. To illustrate, algorithms for $p$-harmonic maps with $p=2$ (isotropic diffusion) and $p=1$ (anisotropic diffusion) are implemented in this subsection. In our examples, noise is only added to the chromaticity of an image. This allows us to consider denoising by chroma diffusion without the added complexity of intensity diffusion. 

``Salt and pepper'' chromaticity noise is applied to the original image in the following manner. Some small subset of image pixels (5\% in our examples) is chosen in a uniformly random manner. At each randomly selected pixel, $\u(\x)$ is set to the direction of red, green or blue in a uniformly random way (e.g., set $\u(\x) = (1,0,0)^T$ if red). This gives the initial, noisy chromaticity map, $\u^0(\x).$ The intensity $I(\x)$ of the original image remains unchanged.

To denoise the chromaticity with isotropic diffusion we apply the $\text{CPM}_{\M}^{\N}$  for harmonic maps (Algorithm~\ref{alg:cpmmm} with $\mathbf{F} = \Delta_{\M} \u$). The flow is evolved until a visual inspection indicates the noise is sufficiently removed; stopping criteria based on reaching steady state could also be implemented. To avoid interpolation of the initial map $\u^0,$ we take $\Delta x = 1$  pixel. We apply second-order centred finite differences in space and forward Euler in time with $\Delta t = 0.1 \Delta x^2.$ Figure~\ref{fig:an-isotropic} shows the original noisy image (left) and the isotropically denoised result (middle) for a cartoon image of Newfoundland row houses~\cite{DougBird}. 
\begin{figure}
\centering
\begin{tabular}{ccc}
\includegraphics[width=2in]{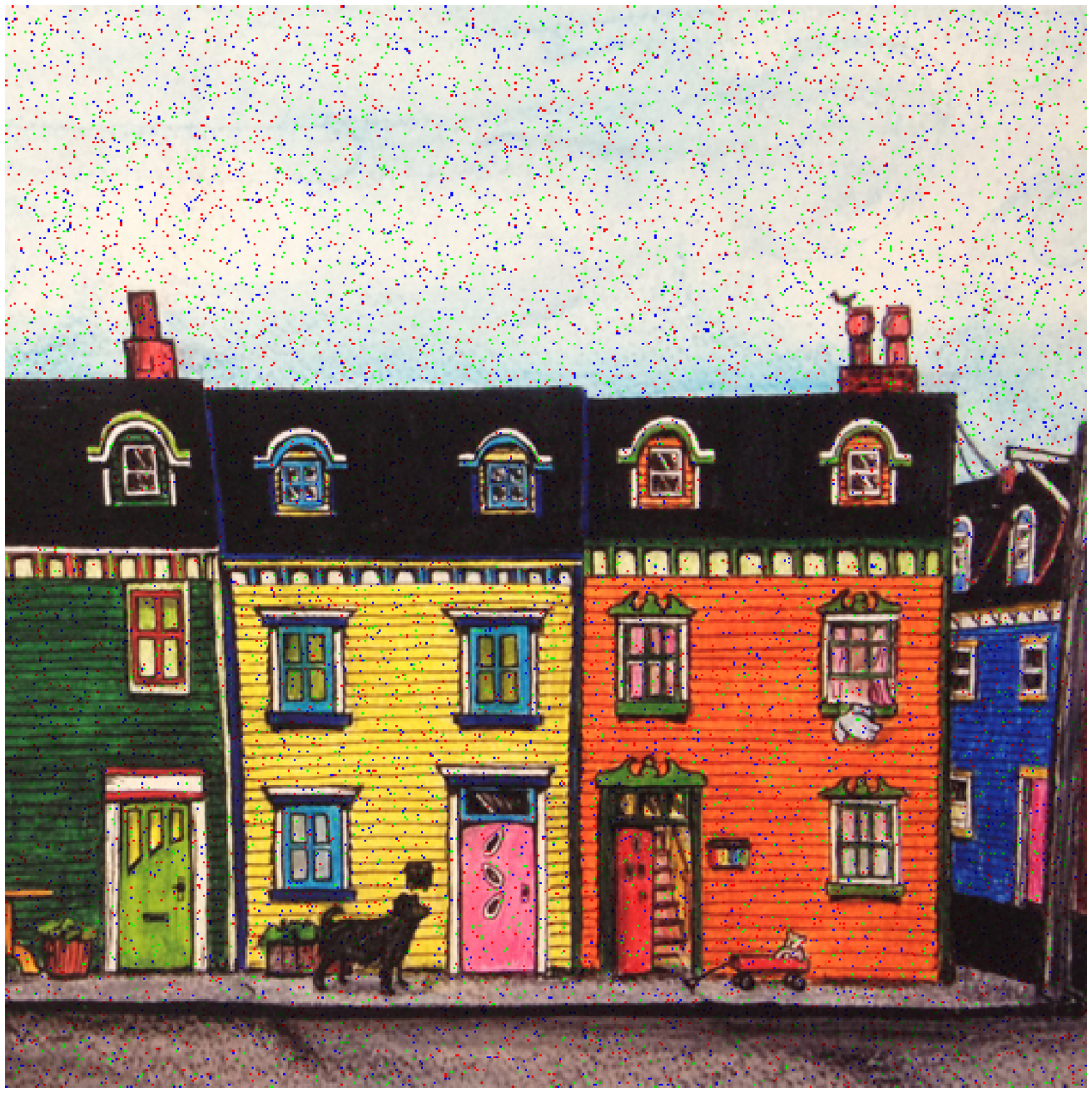} 
    &\includegraphics[width=2in]{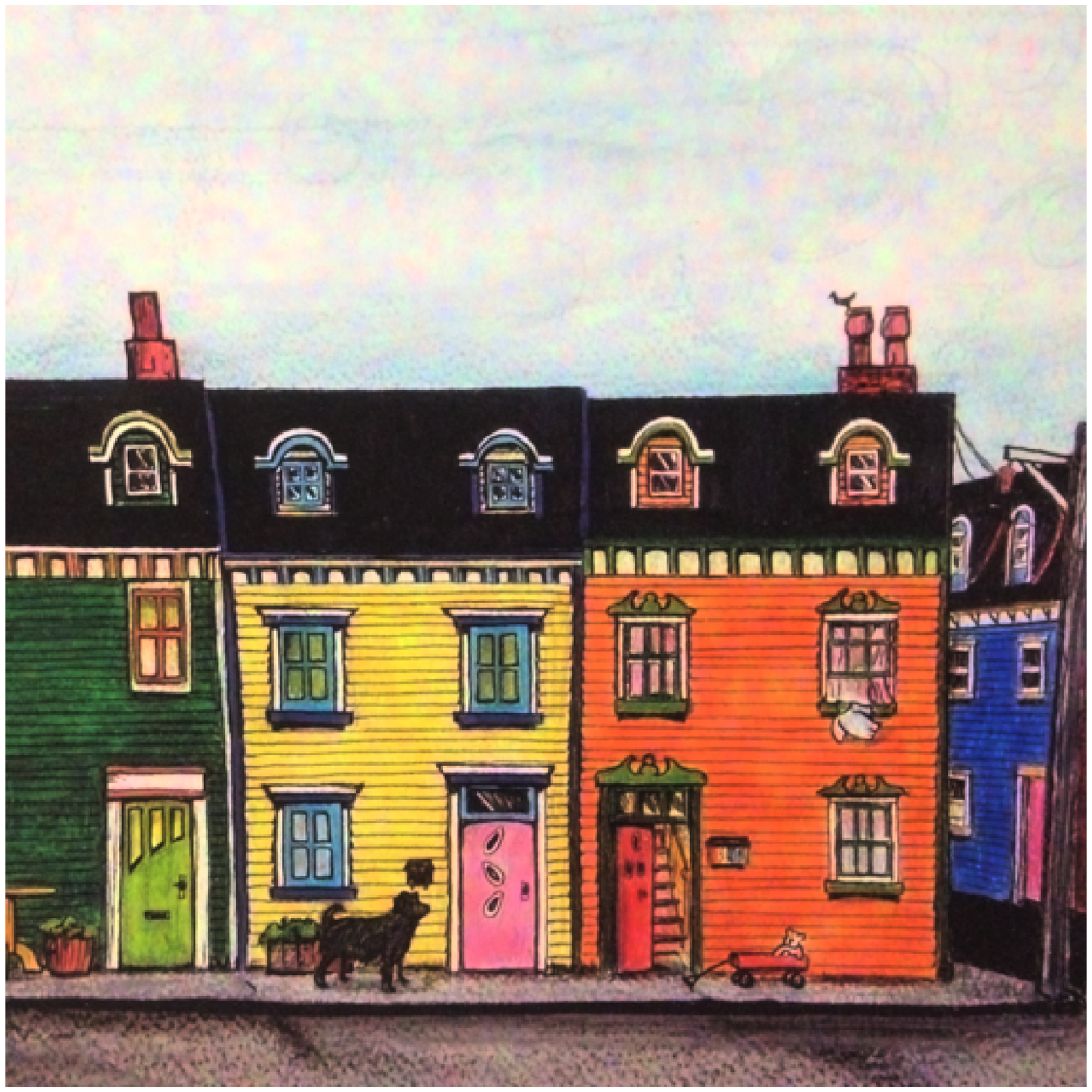} 
&\includegraphics[width=2in]{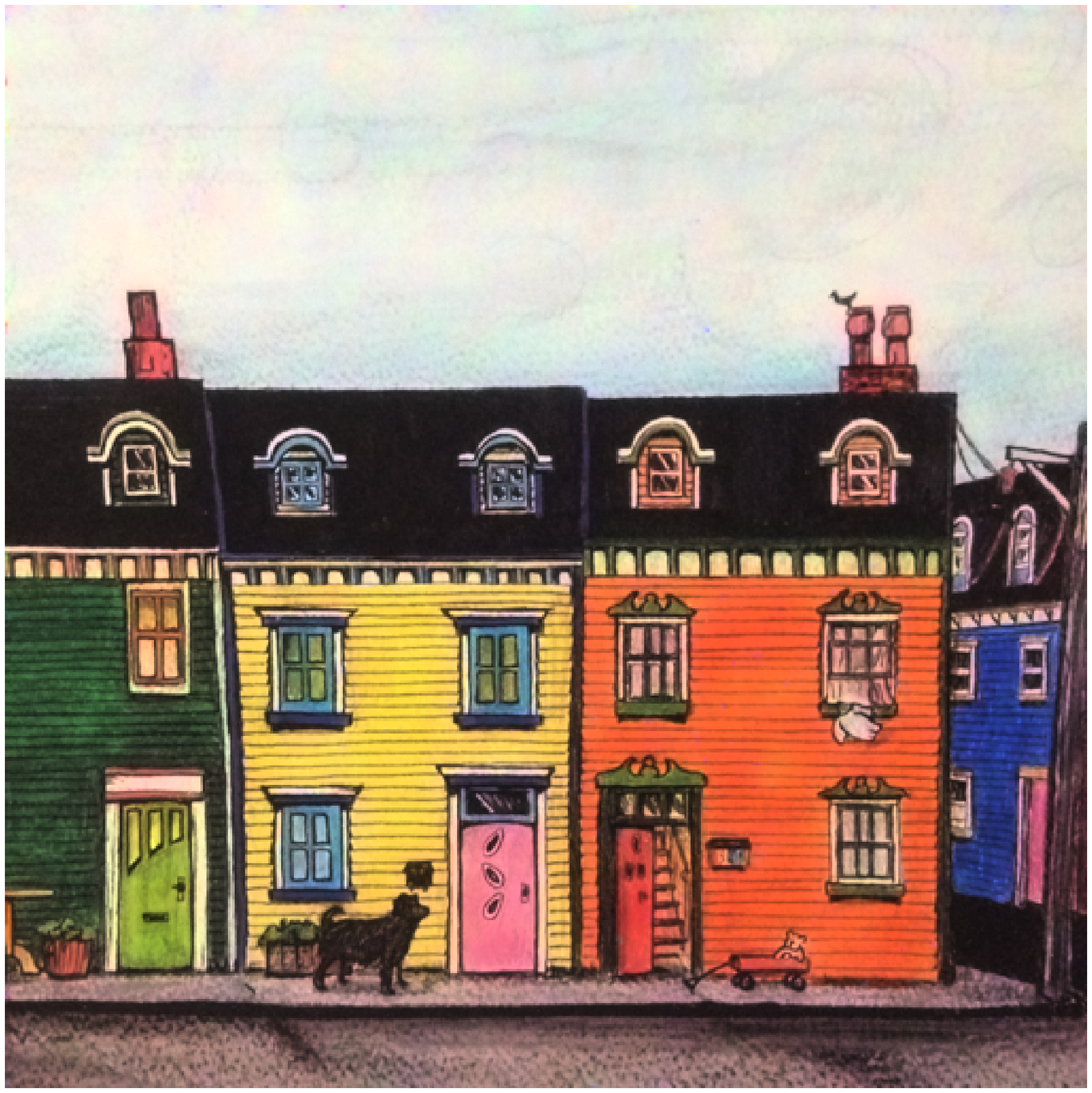} \\
\end{tabular}
\caption{Isotropic and anisotropic diffusion of chromaticity noise in a $512 \times 512$ pixel image. A cartoon of Newfoundland row houses~\cite{DougBird} with noise added to the chromaticity (left) was denoised with 40 times steps of isotropic diffusion (middle). Anisotropic diffusion applied for 120 time steps gives another denoised image (right).}
\label{fig:an-isotropic}
\end{figure}

Anisotropic chromaticity diffusion is slightly more involved numerically. The anisotropic diffusion of the initial, noisy map is carried out using~(\ref{pharm_proj_graddes}) with $p=1$ and $\M\subset \mathbb{R}^2,$ which simplifies to
 \begin{equation}
\left\{ \begin{aligned}
&\frac{\partial \u}{\partial t} = \Pi_{T_{\u} \N} \left(\nabla \cdot \left(\frac{ \J_{\u}}{\|\J_{\u}\|_{\mathcal{F}}}\right)\right),\\
&\u (\x,0) = \u_0 (\x),\\
&\J_{\u} \mathbf{n}|_{\partial \M} = 0.
\end{aligned} \right.
\label{anisotropic_diff}
\end{equation}
 As mentioned in Section~\ref{sec:cpmmm}, the PDE~(\ref{anisotropic_diff}) can be numerically approximated using the $\text{CPM}_{\M}^{\N}$ (Algorithm~\ref{alg:cpmmm}). Each row of $\J_{\u}$ is discretized using first-order forward finite differences in space. An approximation of the divergence of each row of $\J_{\u}/\|\J_{\u}\|_{\mathcal{F}}$ is then obtained using first-order backward finite differences. Forward Euler time-stepping is once again used, but with a time step-size of $\Delta t = 0.5\Delta x^2.$ We avoid division by zero by replacing the denominator with $\|\J_{\u}\|_{\mathcal{F}} + \delta,$ where $\delta \in \mathbb{R}$ is some small positive constant ($\delta = 10^{-16}$ here).

Figure~\ref{fig:an-isotropic} (right) shows the anisotropically denoised image. Both results in Figure~\ref{fig:an-isotropic} are good, and it is difficult to observe differences between isotropic and anisotropic diffusion. Figure~\ref{fig:three_colors} shows a clearer example of how anisotropic diffusion preserves the edges between different colours better than isotropic diffusion. 
\begin{figure}
\centering
\begin{tabular}{ccc}
\includegraphics[width=1.47in]{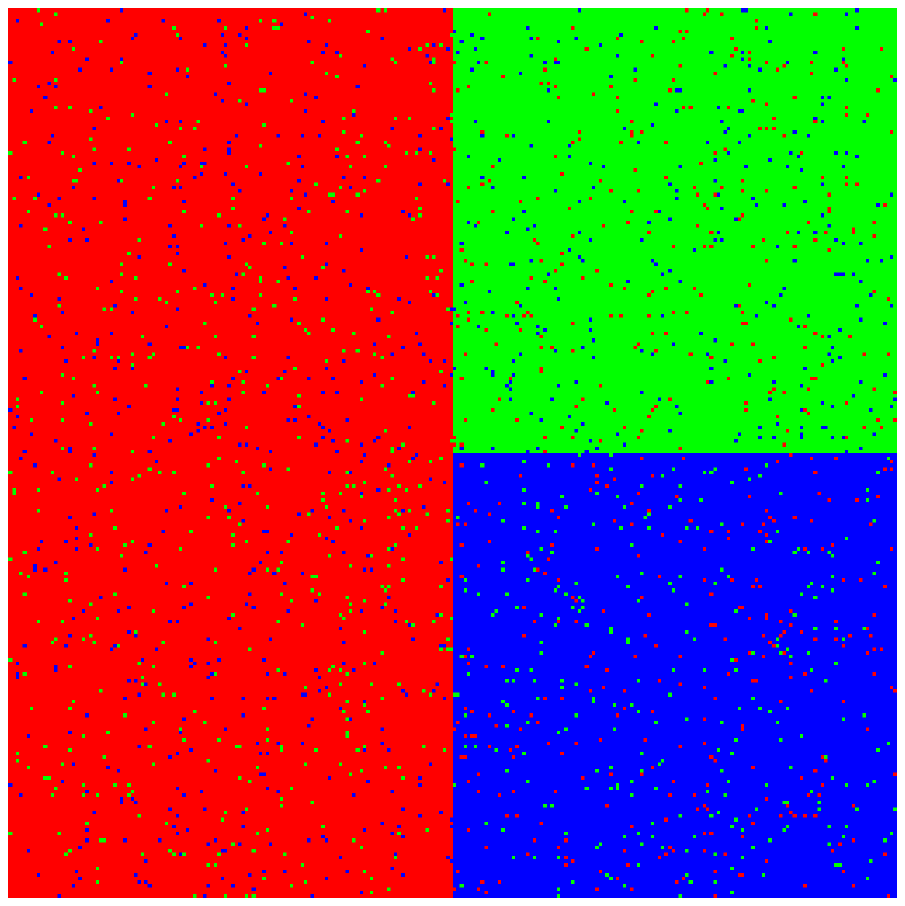}&
    \includegraphics[width=1.47in]{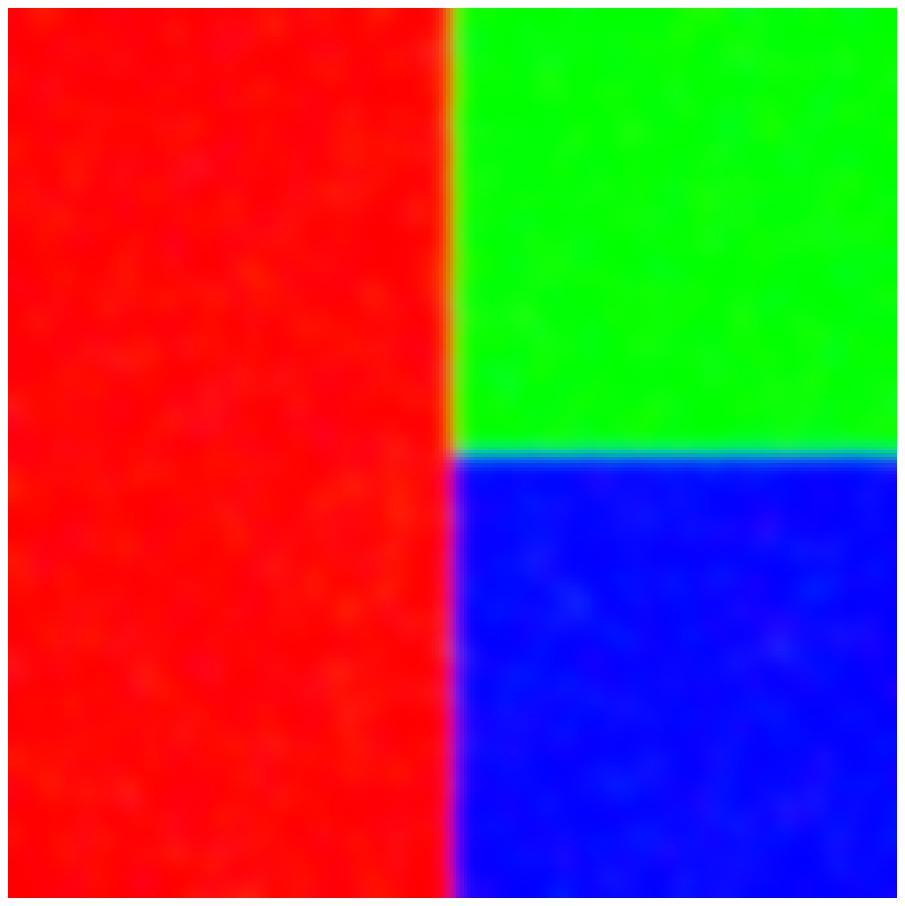}  & \includegraphics[width=1.47in]{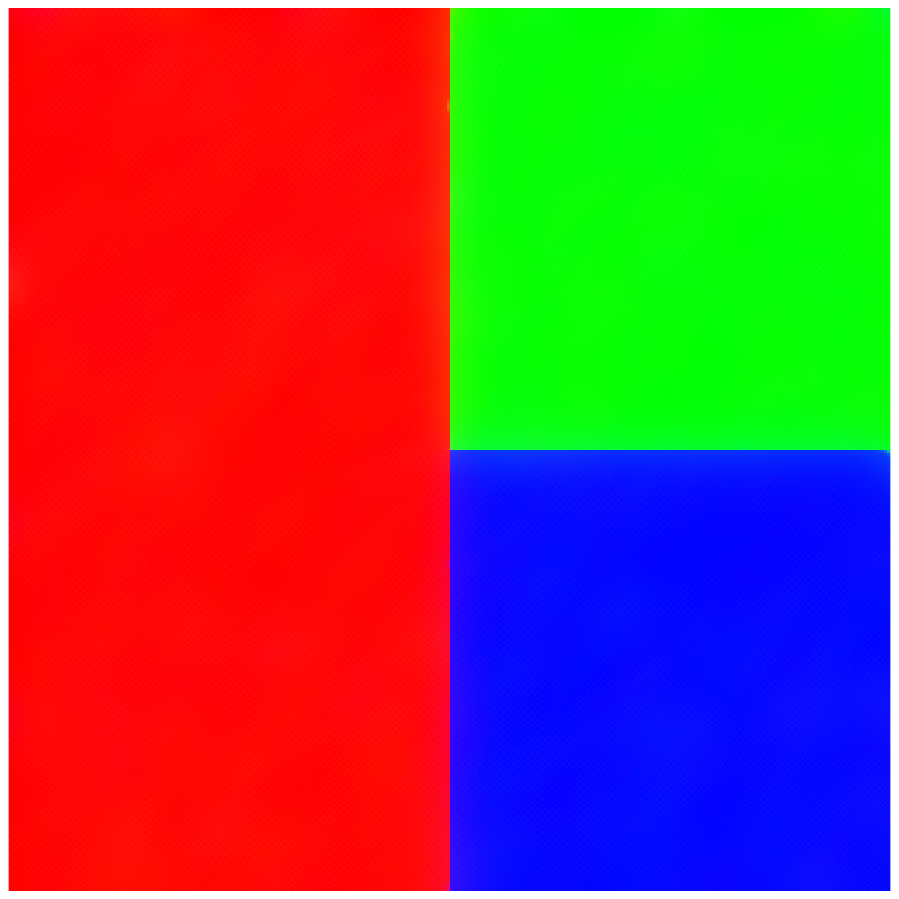} \\
\end{tabular}
\caption{Comparison of isotropic and anisotropic diffusion for a simple $256 \times 256$ pixel red, green and blue image. Chromaticity noise is added (left) and denoised with 30 time steps of isotropic diffusion (middle) and 125 time steps of anisotropic diffusion (right). }
\label{fig:three_colors}
\end{figure}
Edge blur arises between colours for isotropic diffusion, while anisotropic diffusion gives sharp edges.

\section{Conclusion}
\label{sec:conc}
This paper establishes a numerical framework for solving variational problems and PDEs that define maps from a source manifold $\M$ to a target manifold $\N.$ In our approach, the problem is embedded into the surrounding space by writing all geometric quantities intrinsic to $\M$ and $\N$ in terms of $\cp_{\M}$ and $\cp_{\N},$ respectively. The corresponding {\it closest point method for manifold mapping}, $\text{CPM}_{\M}^{\N},$ applies to a wide variety of variational problems and PDEs (see, e.g., (\ref{gen_proj})). Particularly, important cases that our work focuses on are the harmonic and $p$-harmonic maps.

For general mapping problems of the form~(\ref{gen_proj}), the $\text{CPM}_{\M}^{\N}$ (Algorithm~\ref{alg:cpmmm}) alternates between a step of the $\text{CPM}_{\M}$ for PDE evolution intrinsic to $\M$ and a projection step onto $\N$ using $\cp_{\N}.$  Splitting the evolution into two steps reduces the problem of solving a PDE with quantities on both $\M$ and $\N$ to the separate, simpler problems of solving a PDE on $\M$ alone and a projection onto $\N$ via $\cp_{\N}.$ It also eliminates the projection operator $\J_{\cp_{\N}},$ yielding additional computational savings. Consistency of the $\text{CPM}_{\M}^{\N}$ with the original constrained PDE  was shown in Theorem~\ref{theo1}.

Presently, the {\it level set method for manifold mapping}~\cite{Memoli2004}, $\text{LSM}_{\M}^{\N},$ is the most popular method for mapping between general manifolds. The $\text{CPM}_{\M}^{\N}$ is simpler and allows for more general manifold geometry than the $\text{LSM}_{\M}^{\N}.$ In practice, it also exhibits improved stability, computational speed, accuracy, and convergence rates. We illustrate the performance of our method on examples for denoising texture maps, diffusing random maps, and enhancing colour images. 

There are many interesting opportunities for future work.  Of particular interest is the development and study of methods for more general variational problems and PDEs. The study of applications is another rich subject for future work. Interesting examples include the texture mapping method of Dinh et al.~\cite{Dinh2005}, direct mapping of optic nerve heads~\cite{Gibson2010} and direct cortical mapping~\cite{Shi2007,Shi2009}. 

\section*{Acknowledgements}
This research was partially supported by an NSERC Canada grant (RGPIN 227823). The first author was also supported by an NSERC Alexander Graham Bell Canada Graduate Scholarship (Master's).

 \appendix
 \section{The Euler-Lagrange equations for liquid crystals}
 \label{app:liquid_cry}
In this appendix, we illustrate the derivation of the Euler-Lagrange equations~(\ref{cp_EL}) for the important case of liquid crystals, i.e., the case where $\M$ is a flat, open subset of $\mathbb{R}^m$ and $\N=S^{n-1}.$ 

Recall from~(\ref{cp_func}) that the closest point function can be written as 
\begin{equation*}
\cp_{\N}(\y) = \y - \d_{\N}(\y) \nabla \d_{\N}(\y).
\end{equation*}
The signed distance function for the unit hypersphere is
\begin{equation*}
\d_{\N}(\y) = \|\y\|_2 -1, \text{ for all } \y\in\mathbb{R}^n,
\end{equation*}
which gives $\cp_{\N}(\y) = \y/\|\y\|_2.$ Next, the gradient of the $i$-th component of $\cp_{\N}$ is 
\begin{equation*}
\begin{aligned}
\nabla \left\{\cp_{\N}(\y)\right\}_i &= \nabla \left(\frac{y_i}{\|\y\|_2}\right),\\
&= \frac{\nabla y_i}{\|\y\|_2} - y_i\frac{\y}{\|\y\|^3_2}.
\end{aligned}
\end{equation*}
Through a similar calculation
\begin{equation}
\left\{\H_{\cp_{\N}}^i(\y)\right\}_{jk} = 3 \frac{y_i y_j y_k}{\|\y\|_2^5} - \frac{y_k\delta_{ij} + y_j \delta_{ik} + y_i \delta_{jk}}{\|\y\|^3_2},
\label{hess_cp}
\end{equation}
where $\delta_{ij}$ is the Kronecker delta. 

The target manifold constraint that $\u \in S^{n-1}$ simplifies~(\ref{hess_cp}) since $\|\u\|_2 = 1.$ A further simplification is obtained using the identity  
\begin{equation}
 \nabla \d_{\N}(\u(\x)) \cdot\frac{ \partial \u(\x)}{\partial x_{\ell} } = \frac{\partial \d_{\N}(\u(\x))}{ \partial x_{\ell}} = 0,
\label{zero_dis_der}
\end{equation} 
which is derived by differentiating $\d_{\N}(\u(\x)) = 0$ for any $\x\in \M.$ For the signed distance function of the unit hypersphere we have $\nabla \d_{\N}(\u(\x)) \cdot \partial \u(\x) /\partial x_{\ell} = \sum_j u_j (\partial u_j/\partial x_{\ell}).$ Carrying out the matrix-vector multiplication in~(\ref{cp_EL}) yields
\begin{align*}
\left\{\sum_{\ell=1}^m \H_{\cp_{\N}(\u)} \left[\frac{\partial \u}{\partial \x_{\ell}},\frac{\partial \u}{\partial \x_{\ell}}\right]\right\}_i &= \sum_{\ell=1}^m \sum_{j=1}^n \sum_{k=1}^n \left(3 u_i u_j u_k - u_k\delta_{ij} - u_j \delta_{ik} - u_i \delta_{jk}\right) \frac{\partial u_j}{\partial x_{\ell}} \frac{\partial u_k}{\partial x_{\ell}},\\
&= -\sum_{\ell=1}^{m} \sum_{j=1}^n u_i \left(\frac{\partial u_j}{\partial x_{\ell}}\right)^2,\\
&= - u_i \|\J_{\u}\|^2_{\mathcal{F}}.
\end{align*}
Substituting into~(\ref{cp_EL}) gives the Euler-Lagrange equations for liquid crystals~\cite{Virga1994} $$ \Delta \u + \|\J_{\u}\|^2_{\mathcal{F}} \u = 0.$$

 \section*{References}

  \bibliographystyle{elsarticle-num} 
    \bibliography{manifold_mapping}


\end{document}